\numberwithin{equation}{section}
\numberwithin{figure}{section}
\renewcommand{\labelenumi}{\theenumi}
\newcommand{\RR}{{\mathbb R}}
\newcommand\R{\mathbb{R}}
\newcommand\Z{\mathbb{Z}}
\newcommand\T{\mathbb{T}}
\newcommand\al{\alpha}
\newcommand\gam{\gamma}
\newcommand\Gam{\Gamma}
\newcommand\Del{\Delta}
\newcommand\lam{\lambda}
\newcommand\Lam{\Lambda}
\newcommand\sig{\sigma}
\newcommand\Om{\Omega}
\newcommand\1{\mathds{1}}
\newcommand\eps{\varepsilon}
\newcommand\pphi{\varphi}
\renewcommand\le{\leqslant}
\renewcommand\ge{\geqslant}
\renewcommand\leq{\leqslant}
\newcommand\sbt{\subset}
\newcommand{\mes}{\operatorname{mes}}
\newcommand{\Vol}{\operatorname{Vol}}
\newcommand{\alta}{A'}
\newcommand{\altb}{B'}
\newcommand{\bde}{\stackrel{\text{bd}}{\sim}}
\theoremstyle{plain}
\newtheorem{thm}{Theorem}[section]
\newtheorem{theorem}[thm]{Theorem}
\newtheorem{lemma}[thm]{Lemma}
\newtheorem{proposition}[thm]{Proposition}
\newtheorem*{claim*}{Claim}
\newcommand{\thmref}[1]{Theorem~\ref{#1}}
\newcommand{\secref}[1]{Section~\ref{#1}}
\newcommand{\lemref}[1]{Lemma~\ref{#1}}
\newcommand{\propref}[1]{Proposition~\ref{#1}}
\theoremstyle{definition}
\newtheorem{definition}[thm]{Definition}
\newtheorem*{definition*}{Definition}
\newtheorem*{remarks*}{Remarks}
\newtheorem*{remark*}{Remark}
\newenvironment{enumerate-roman}
{\begin{enumerate}
\addtolength{\itemsep}{5pt}
\renewcommand\theenumi{(\roman{enumi})}}
{\end{enumerate}}
\newenvironment{enumerate-alph}
{\begin{enumerate}
\addtolength{\itemsep}{5pt}
\renewcommand{\theenumi}{(\alph{enumi})}}
{\end{enumerate}}
\newenvironment{enumerate-num}
{\begin{enumerate}
\addtolength{\itemsep}{5pt}
\renewcommand{\theenumi}{(\roman{enumi})}}
{\end{enumerate}}
\newenvironment{enumerate-text}
{\begin{enumerate}
\addtolength{\itemsep}{5pt}
\renewcommand{\theenumi}{\arabic{enumi}.}}
{\end{enumerate}}
\begin{document}

\ifdefined\SMART
\title
[Bounded remainder sets etc]
{Bounded remainder sets, bounded distance equivalent cut-and-project sets, and equidecomposability}
\else
\title
[Bounded remainder sets, cut-and-project sets and equidecomposability]
{Bounded remainder sets, bounded distance equivalent cut-and-project sets, and equidecomposability}
\fi

\author[M. Etkind]{Mark Mordechai Etkind}
\address{Department of Mathematics, Bar-Ilan University, Ramat-Gan 5290002, Israel}
\email{mark.etkind@mail.huji.ac.il}

\author[S. Grepstad]{Sigrid Grepstad} 
\address{Department of Mathematical Sciences, Norwegian University of Science and Technology (NTNU), NO-7491 Trondheim, Norway} 
\email{sigrid.grepstad@ntnu.no}

\author[M. Kolountzakis]{Mihail N. Kolountzakis}
\address{Department of Mathematics and Applied Mathematics, University of Crete, Voutes Campus, 70013 Heraklion, Greece and Institute of Computer Science, Foundation of Research and Technology Hellas, N. Plastira 100, Vassilika Vouton, 700 13, Heraklion, Greece}
\email{kolount@gmail.com}

\author[N. Lev]{Nir Lev}
\address{Department of Mathematics, Bar-Ilan University, Ramat-Gan 5290002, Israel}
\email{levnir@math.biu.ac.il}

\date{June 12, 2026}

\subjclass[2020]{52C23, 52B45, 11K38} 
\keywords{Bounded remainder sets, cut-and-project sets, bounded distance equivalence, equidecomposability}

\thanks{Research supported by ISF Grant 854/25 and Grant 334466 of the Research Council of Norway.}

\begin{abstract}
We use the measurable Hall's theorem due to Cie\'sla and Sabok to prove that (i) if two measurable sets $A,B \subset \mathbb{R}^d$ of the same measure are bounded remainder sets with respect to a totally irrational $d$-dimensional vector $\alpha$, then $A, B$ are equi\-decomposable with measurable pieces using translations from $\mathbb{Z} \alpha + \mathbb{Z}^d$; and (ii) given a lattice $\Gamma \subset \mathbb{R}^m \times \mathbb{R}^n$ with projections $p_1$ and $p_2$ onto $\mathbb{R}^m$ and $\mathbb{R}^n$ respectively, if two cut-and-project sets in $\mathbb{R}^m$ obtained from Riemann measurable windows $W, W' \subset \mathbb{R}^n$ are bounded distance equivalent, then $W, W'$ are equidecomposable with measurable pieces using translations from $p_2(\Gamma)$. We also prove by a different method that for one-dimensional cut-and-project sets, if the windows $W, W' \subset \mathbb{R}^n$ are polytopes then the pieces can also be chosen to be polytopes; however this result fails in dimensions two and higher.
\end{abstract}

\maketitle

\ifdefined\MIHALIS
\tableofcontents
\fi


\section{Introduction}

\subsection{}
Let $X$ be a set endowed with a group of transformations $G$. Two subsets $A, B \sbt X$ are called \emph{$G$-equidecomposable} if they can be partitioned into the same \textit{finite} number of pieces $A = \bigcup_{i=1}^n A_i$, $B = \bigcup_{i=1}^n B_i$, which can be pairwise matched via elements of $G$, i.e.\ $B_i = g_i \cdot A_i$ for some $g_i \in G$, $i=1, 2, \ldots, n$, where $g \, \cdot $ denotes the group action.

A famous example of equidecomposability is the so-called Tarski circle squaring problem, which was posed by Tarski (1925) \cite{TW16}: is a square of area $1$ equidecomposable to a disk of area $1$ via plane isometries? This was answered in the affirmative by Laczkovich \cite{Lac90}: the square of unit area can be partitioned into a finite number of pieces which can then be translated to form a partition of a disk of unit area (thus the group of transformations of the plane used is not the whole group of isometries but merely the group of translations). Moreover, it was proved by Grabowski, M\'ath\'e and Pikhurko \cite{GMP17} that the pieces in this result can be chosen Lebesgue measurable.

In the present paper we consider the case where $G$ is a finitely generated group of translations of $\RR^d$, usually dense in the group of all translations. We also relax the concept of equidecomposability to ignore sets of Lebesgue measure zero: two sets $A$, $B$ are called $G$-equidecomposable \emph{up to measure zero} if we can remove from them a set of measure zero such that the remaining sets are $G$-equidecomposable. This relaxation is particularly natural if one is to impose the requirement of measurability on the pieces of the equidecomposition. This relaxation does not usually cause any problems in applications of equidecomposability, e.g.\ to tilings \cite{GK26}. Subject to these assumptions and demands, our goal in this paper is generally to achieve equidecomposability with \textit{measurable} pieces.

One can think of the equidecomposability of $A$ and $B$ as a problem of finding a perfect matching in a bipartite graph. Take the bipartite graph with the points of $A$ on one side and the points of $B$ on the other. Then $A, B$ are $G$-equidecomposable if and only if there exists a finite set $F \subset G$ such that the bipartite graph whose edges are all pairs of the form $(a, f \cdot a)$ with $a \in A$, $f \cdot a \in B$, $f \in F$, has a perfect matching. Recall that a \emph{perfect matching} is a collection of disjoint edges that touch all points of $A$ and $B$. Let us call such a perfect matching a \textit{$G$-matching}.

Our main tool in the effort to produce measurable pieces in an equidecomposition is the \textit{measurable Hall's theorem} due to Cie\'{s}la and Sabok \cite{CS22} (see Theorem \ref{thm:cs22} below), which uses an appropriately mixing group action on the ambient space in order to deduce the existence of a \textit{measurable} $G$-matching between two sets $A, B$ from the existence of an arbitrary (not necessarily measurable) $G$-matching. By a measurable $G$-matching we mean a $G$-matching for which the set $A_g = \{a \in A : \text{$(a, g\cdot a)$ is part of the matching}\}$ is measurable for each $g \in G$.

\subsection{}
The structure of the rest of this paper is as follows.

In the preliminary Section \ref{s:hall} we review the equidecomposability concepts that will be used in the paper and formulate the measurable Hall's theorem due to Cie\'{s}la and Sabok \cite{CS22}.

In Section \ref{s:brem-sets} we discuss \textit{bounded remainder sets}, and we show that if two measurable sets $A,B \sbt \R^d$ of the same measure are bounded remainder sets with respect to a totally irrational $d$-dimensional vector $\alpha$, then $A,B$ are equidecomposable with measurable pieces using translations from $\mathbb{Z} \alpha + \mathbb{Z}^d$. 

In Section \ref{s:bde} we prove that if two model sets constructed from the same lattice and two different Riemann measurable windows $W$ and $W'$ are \emph{bounded distance equivalent}, then
the two windows are equi\-decomposable up to measure zero with measurable pieces using translations from $p_2(\Gamma)$, where $\Gamma$ is the common lattice defining the model sets and $p_2(\Gam)$ is its projection onto the subspace containing the windows $W$ and $ W'$. This addresses a gap in the proof of \cite[Theorem 1.1]{Gre25}, see \cite{Gre26}.

The results in Sections \ref{s:brem-sets} and \ref{s:bde} rely on the measurable Hall's theorem \cite{CS22}
which only gives equidecomposability with measurable pieces.
In Section \ref{s:1d}  we use a different method to prove that if two \emph{one-dimensional} model sets are bounded distance equivalent, then the corresponding Riemann measurable windows are equidecomposable \textit{with Riemann measurable pieces} using translations from $p_2(\Gamma)$.
Moreover, if the windows are polytopes
then the pieces of the equidecomposition
can also be chosen to be polytopes;
however this result fails
for model sets in dimensions two and higher.

Finally, the paper includes an appendix where we clarify the status of the results announced in the retracted paper \cite{Gre25}.


\section{Equidecomposability and Hall's condition}\label{s:hall}

In this preliminary section we review
the connection between equidecomposability and Hall's condition,
and state the measurable Hall's theorem due to Cie\'{s}la and Sabok
\cite{CS22} that will be used later on.

\subsection{Equidecomposability}\label{ss:equidecomposition}

Let $X$ be a set endowed with an action of a group $G$.
We use $g \cdot x$ to denote 
the action of an element $g \in G$ on a point $x \in X$.

We say that two sets $A, B \subset X$ are 
\emph{$G$-equidecomposable} 
if there exist finitely many
sets $A_1, \dots, A_n \sbt X$ and elements
$g_1, \dots, g_n \in G$ such that 
$\{A_j\}_{j=1}^{n}$ forms a partition of $A$,
 while $\{g_j \cdot A_j \}_{j=1}^{n}$
forms a partition of $B$. 

We say that  $A, B \subset X$  
satisfy \emph{Hall's condition} with respect to $G$,
if there exists a finite set $F \sbt G$ such that
\begin{enumerate-num}
\item
\label{it:x.hall.a}
$|S | \le |(F \cdot S) \cap B|$
for every finite set
$S \sbt A$;
\item
\label{it:x.hall.b}
$|T| \le |(F^{-1} \cdot T) \cap A|$
for every finite set $T \sbt B$.
\end{enumerate-num}

To motivate this definition,  consider  $A, B$
as two disjoint vertex sets of a bipartite graph,
where two vertices $a \in  A $ and 
$b \in  B $ are connected by an
edge if and only if $b = g \cdot a$ for some
$g \in F$. The conditions \ref{it:x.hall.a}  and
\ref{it:x.hall.b} then say that the size of
every finite set of vertices in $ A  $
or in $ B  $
does not exceed the size of the set of its neighbors in the graph.

The following proposition clarifies the connection between 
the notions of 
equidecomposability and Hall's condition.

\begin{proposition}
\label{prop:x.hall}
Two sets $A, B \subset X$  are
$G$-equidecomposable
if and only if $A$ and $B$
satisfy Hall's condition with respect to $G$.
\end{proposition}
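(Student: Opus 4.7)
The plan is to prove the two implications separately, the nontrivial direction being a reduction to Hall's marriage theorem for locally finite bipartite graphs.

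For the ``only if'' direction, suppose that $\{A_j\}_{j=1}^n$ partitions $A$ and $\{g_j \cdot A_j\}_{j=1}^n$ partitions $B$. Set $F = \{g_1, \dots, g_n\}$ and define $\pphi \colon A \to B$ by $\pphi(a) = g_j \cdot a$ whenever $a \in A_j$. Then $\pphi$ is a bijection satisfying $\pphi(a) \in F \cdot a$ for every $a$, so for any finite $S \sbt A$,
$$ |S| = |\pphi(S)| \le |(F \cdot S) \cap B|, $$
which is condition \ref{it:x.hall.a}. The inverse $\pphi^{-1}$ satisfies $\pphi^{-1}(b) \in F^{-1} \cdot b$ for every $b \in B$, and the analogous computation yields \ref{it:x.hall.b}.

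For the ``if'' direction, I would form the bipartite graph $H$ with vertex classes $A$ and $B$, in which $a$ and $b$ are joined by an edge whenever $b = f \cdot a$ for some $f \in F$. Since $|F| < \infty$, every vertex has at most $|F|$ neighbors, so $H$ is locally finite. For a finite $S \sbt A$ the neighborhood of $S$ in $H$ is exactly $(F \cdot S) \cap B$, and for a finite $T \sbt B$ its neighborhood is $(F^{-1} \cdot T) \cap A$. Hence conditions \ref{it:x.hall.a} and \ref{it:x.hall.b} are precisely the two-sided Hall condition on $H$.

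The main step is then to invoke the extension of Hall's marriage theorem to locally finite bipartite graphs: such a graph satisfying Hall's condition from both sides admits a perfect matching. (This is the standard compactness argument from the finite Hall theorem, applied component by component.) Given a perfect matching $M$ in $H$, each edge has the form $(a, f \cdot a)$ for some $f \in F$, and setting
$$ A_f = \{a \in A : (a, f \cdot a) \in M\}, \qquad f \in F, $$
partitions $A$ into $|F|$ pieces whose translates $f \cdot A_f$ partition $B$, which is the desired $G$-equidecomposition. The only nontrivial ingredient is the infinite Hall theorem; no feature of the specific group action is used, and this will be the main obstacle to cite cleanly rather than to prove.
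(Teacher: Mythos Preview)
Your proposal is correct and follows essentially the same route as the paper. The only difference is packaging: where you invoke a single black-box ``infinite Hall theorem for locally finite bipartite graphs with two-sided Hall condition $\Rightarrow$ perfect matching,'' the paper unpacks this into its three standard ingredients --- the finite Hall theorem, a compactness step (Tychonoff, citing \cite{HV50}) to produce an injection $A\to B$ along $F$-edges and likewise $B\to A$, and then the Cantor--Schr\"{o}der--Bernstein argument to merge the two injections into a bijection. Your parenthetical ``compactness argument \dots\ applied component by component'' slightly understates what is needed: compactness alone yields only the one-sided saturating matchings, and the CSB step is what upgrades them to a perfect matching.
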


\begin{proof}
We first prove the `if' part.
Suppose  that there is a finite set $F \sbt G$ such that
\ref{it:x.hall.a} and \ref{it:x.hall.b} hold. By the classical 
Hall's marriage theorem, the condition
\ref{it:x.hall.a} implies that for every finite set
$S \sbt A $ there exists an
injective map $\pphi_S: S \to B $
satisfying $\pphi_S(a) \in F \cdot a$ for all $a \in S$.
By an application of Tychonoff's theorem,
see \cite{HV50}, there is an
injective map $\pphi: A  \to B $
such that $\pphi(a) \in F \cdot a$ for all $a \in A $.
In a similar way, we deduce from 
\ref{it:x.hall.b} that there is an
injective map $\psi: B  \to A $
such that $\psi(b) \in F^{-1} \cdot b$ for all $b \in B $.
In turn, the proof of the Cantor-Schr\"{o}der-Bernstein 
theorem (see \cite[Theorem 3.6]{TW16})
yields a bijection $\chi: A  \to B $ such that 
$\chi(a) \in F \cdot a$ for all $a \in A $.
This implies that $A $ and 
$B $ are equidecomposable using
only actions of the finite set $F$.

Next we prove the `only if' part. Suppose that  
$\{A_j\}_{j=1}^{n}$ forms a partition of $A $
and that $\{g_j \cdot A_j \}_{j=1}^{n}$
forms a partition of $B $, where $g_1, \dots, g_n \in G$.
This allows us to define a  bijection $\chi: A \to B$
given  by $\chi(a) = g_j \cdot a$ if $a \in A_j$.
By the necessity part of the classical
Hall's marriage theorem, this implies that 
both conditions
\ref{it:x.hall.a} and \ref{it:x.hall.b} are satisfied
with the finite set $F = \{g_1, \dots, g_n\}$.
\end{proof}

\subsubsection*{Remarks}

1. The proof shows that if $A,B$ 
satisfy Hall's condition with a given finite set $F \sbt G$,
then $A,B$ are
equidecomposable using only actions of the same finite set
$F$, and also the converse it true.

2. In the case where the sets $A,B$ are \emph{countable},
the application of Tychonoff's theorem
can be replaced by a standard diagonalization argument.

\subsection{Equidecomposability up to measure zero}
\label{ss:equid-0}

Let  $(X, \mu)$ be a (finite or infinite) measure space, 
endowed with a  measure preserving action
of a \emph{countable} group $G$.

We say that two measurable sets $A, B \subset X$ are 
$G$-equidecomposable \emph{up to measure zero},
if there exist 
finitely many sets $A_1, \dots, A_n \sbt X$,
elements $g_1, \dots, g_n \in G$
and a full measure subset $X' \sbt X$, such that 
$\{A_j \cap X'\}_{j=1}^{n}$ forms a partition of $A \cap X'$,
 while $\{(g_j \cdot A_j ) \cap X'\}_{j=1}^{n}$
forms a partition of $B \cap X'$. 
If the sets $A_1, \dots, A_n$ can be chosen 
measurable, then we say that $A, B$
are $G$-equidecomposable up to measure zero 
\emph{with measurable pieces}.

Following \cite[Definition 1]{CS22} we say 
that two measurable sets $A, B \subset X$  
satisfy Hall's condition \emph{a.e.}\ with respect to $G$,
if there is a finite set $F \sbt G$ and
a full measure subset $X' \sbt X$, such that 
for every $x \in X'$ we have
\begin{enumerate-num}
\renewcommand{\theenumi}{(\roman{enumi}')}
\renewcommand{\labelenumi}{\theenumi}
\item
\label{it:m.hall.a}
$|S | \le |(F \cdot S) \cap B|$
for every finite set
$S \sbt A \cap (G \cdot x)$;
\item
\label{it:m.hall.b}
$|T| \le |(F^{-1} \cdot T) \cap A|$
for every finite set $T \sbt B \cap (G \cdot x)$.
\end{enumerate-num}

In other words, for almost every $x \in X$ the two
sets $A \cap (G \cdot x)$ and
$B \cap (G \cdot x)$ satisfy Hall's condition
with the same finite set $F \sbt G$.

\begin{proposition}
\label{prop:m.hall}
Let  $(X, \mu)$ be a measure space endowed
with a measure preserving action of a countable group $G$.
Two measurable sets $A, B \subset X$  are
\emph{$G$-equidecomposable} up to measure zero
(with possibly non-measurable pieces)
if and only if $A, B$
satisfy \emph{Hall's condition} a.e.\ with respect to $G$.
\end{proposition}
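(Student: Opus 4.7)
The plan is to reduce this to the orbit-by-orbit application of Proposition \ref{prop:x.hall}. The preparatory step, common to both directions, is to replace the given full-measure subset $X_0 \sbt X$ (coming from the hypothesis on either side) by the $G$-invariant full-measure subset
$$ X' = \bigcap_{g \in G} g \cdot X_0, $$
which has full measure because $G$ is countable and the action preserves $\mu$. After this replacement, every orbit $G \cdot x$ that meets $X'$ is entirely contained in $X'$, so orbit-wise considerations carry through cleanly.

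For the `only if' direction I would argue as follows. Given a partition $\{A_j \cap X_0\}_{j=1}^n$ of $A \cap X_0$ and the partition $\{(g_j \cdot A_j) \cap X_0\}_{j=1}^n$ of $B \cap X_0$, pass to $X'$ as above. Because $X'$ is $G$-invariant, $(g_j \cdot A_j) \cap X' = g_j \cdot (A_j \cap X')$, so we obtain a bijection $\chi: A \cap X' \to B \cap X'$ defined by $\chi(a) = g_j \cdot a$ when $a \in A_j \cap X'$, whose values move each point by an element of the finite set $F = \{g_1, \dots, g_n\}$. Restricting $\chi$ to any orbit $G \cdot x \sbt X'$ gives a bijection between $A \cap (G \cdot x)$ and $B \cap (G \cdot x)$ whose displacements lie in $F$; the necessity direction of the classical Hall theorem then yields conditions \ref{it:m.hall.a} and \ref{it:m.hall.b} at every $x \in X'$.

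For the `if' direction, suppose Hall's condition holds a.e.\ with finite $F \sbt G$ and full-measure $X_0$, and pass to the invariant $X'$. Then for every $x \in X'$ the sets $A \cap (G \cdot x)$ and $B \cap (G \cdot x)$ satisfy Hall's condition with respect to $F$, so Proposition \ref{prop:x.hall} (applied orbit-wise, with $X$ replaced by the single orbit $G \cdot x$) produces a partition $A \cap (G \cdot x) = \bigsqcup_{g \in F} A_g^{(x)}$ such that $\{g \cdot A_g^{(x)}\}_{g \in F}$ partitions $B \cap (G \cdot x)$. Choosing one such partition for each orbit (via the axiom of choice, freely available since we are not asking for measurability) and setting $A_g$ to be the union of $A_g^{(x)}$ over a system of orbit representatives, the sets $\{A_g\}_{g \in F}$ partition $A \cap X'$ while $\{g \cdot A_g\}_{g \in F}$ partition $B \cap X'$, giving the desired equidecomposition up to measure zero.

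The main obstacle is simply to handle the `up to measure zero' aspect cleanly; this is exactly what the passage to the $G$-invariant set $X'$ accomplishes, and it is here that the countability of $G$ is essential. Once we are on $X'$, the argument is essentially the orbit-wise application of Proposition \ref{prop:x.hall} together with the easy observation that equidecompositions on disjoint orbits can be assembled into a single one. No measurability issue intervenes, since the $A_g$ produced by the above selection need not be measurable.
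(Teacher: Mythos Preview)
Your proof is correct and follows essentially the same approach as the paper: both directions pass to the $G$-invariant full-measure set $X' = \bigcap_{g \in G} g \cdot X_0$ and then reduce to \propref{prop:x.hall}. The only cosmetic difference is that in the `if' direction the paper observes that orbit-wise Hall's condition implies Hall's condition for $A \cap X'$ and $B \cap X'$ globally and applies \propref{prop:x.hall} once, whereas you apply it on each orbit and assemble via choice; the content is the same.
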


\begin{proof}
We first prove the `if' part. Assume that 
 there is a finite set $F \sbt G$ and
a full measure subset $X' \sbt X$ such that 
\ref{it:m.hall.a} and \ref{it:m.hall.b} 
hold for every $x \in X'$. Since the group $G$ is countable,
then by replacing $X'$ with 
 $\bigcap_{g \in G} (g \cdot X' )$
we may assume that  $G \cdot X' = X'$,
that is, $X'$ is a $G$-invariant set.
It follows that the two sets
$A' = A \cap X'$ and $B' = B \cap X'$
satisfy Hall's condition with the finite set $F$,
  hence $A', B'$  are $G$-equidecomposable
by \propref{prop:x.hall}. 
As a consequence, $A,B$ are 
$G$-equidecomposable up to measure zero.

To prove the converse `only if' part, suppose now that
$\{A_j \cap X'\}_{j=1}^{n}$ forms a partition of $A \cap X'$
and $\{(g_j \cdot A_j ) \cap X'\}_{j=1}^{n}$
forms a partition of $B \cap X'$,
where $g_1, \dots, g_n \in G$  
and $X'$ is a full measure subset of $X$.
Again by replacing $X'$ with 
 $\bigcap_{g \in G} (g \cdot X' )$
we may assume that  $X'$ is a $G$-invariant set. 
This implies that the two sets  $A \cap X'$ and $B \cap X'$
are $G$-equidecomposable considered
as subsets of the set $X'$.
Hence by
\propref{prop:x.hall}  
 there is a finite set $F \sbt G$ such that 
\ref{it:m.hall.a} and \ref{it:m.hall.b} 
hold for every $x \in X'$. 
\end{proof}

\subsection{The measurable Hall's theorem}\label{ss:meas-hall}
Next we state the measurable Hall's theorem 
proved in \cite{CS22}. 
 The theorem gives conditions guaranteeing that
  two measurable sets $A,B \sbt X$ satisfying
Hall's condition are equidecomposable
 \emph{with measurable pieces}.

Assume now that
 $(X, \mu)$ is a standard Borel probability space, endowed
with a \emph{free} pmp (probability measure preserving) action
of a \emph{finitely generated abelian} group $G$.
We recall that the action of $G$ on $X$ is called \emph{free}
if $g \cdot x \ne x$ for every 
nontrivial element  $g  \in G$ and every $x \in X$.

By the structure theorem for finitely generated abelian groups,
we may assume that
$G  = \Z^d \times \Del$ where $d$ is a nonnegative integer and
$\Del$ is a finite abelian group.

\begin{definition}[{see \cite[Definition 5]{CS22}}]
A measurable set $A \sbt X$ is called \emph{$G$-uniform}
if there exist positive constants $c$ and
$n_0$, such that for almost every $x \in X$  
and for every $n > n_0$ we have
$|A \cap (F_n \cdot x)| \ge c n^d$,
where $F_n := \{0, 1, \dots, n-1\}^d \times \Del$.
\end{definition}

The measurable Hall's theorem 
due to Cie\'{s}la and Sabok
states the following:

\begin{theorem}[{\cite[Theorem 2]{CS22}}]
\label{thm:cs22}
Let $(X, \mu)$ be a standard Borel probability space, endowed
with a free pmp action
of a finitely generated abelian group $G$,
and let 
 $A, B \subset X$ be two measurable $G$-uniform sets.
 Then the following conditions are equivalent:
\begin{enumerate-alph}
\item \label{it:cs.a}
$A$ and $B$
satisfy Hall's condition a.e.\ with respect to $G$;
\item \label{it:cs.b}
$A$ and $B$ are $G$-equidecomposable
 up to measure zero (with 
possibly non-measurable pieces);
\item \label{it:cs.c}
$A$ and $B$ are $G$-equidecomposable 
up to measure zero  with measurable pieces.
\end{enumerate-alph}
\end{theorem}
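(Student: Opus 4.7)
The plan is to establish the three implications \ref{it:cs.a} $\Leftrightarrow$ \ref{it:cs.b} $\Leftrightarrow$ \ref{it:cs.c} in the natural cyclic order: \ref{it:cs.c} $\Rightarrow$ \ref{it:cs.b} is trivial since a measurable equidecomposition is in particular an equidecomposition; \ref{it:cs.b} $\Rightarrow$ \ref{it:cs.a} is exactly the `only if' direction already proved in \propref{prop:m.hall}, which requires no measurability. The entire substance of the theorem is therefore the implication \ref{it:cs.a} $\Rightarrow$ \ref{it:cs.c}, i.e.\ upgrading a pointwise Hall matching on orbits into a globally measurable one.

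To produce measurable pieces, first I would encode the problem as a \emph{Borel bipartite graph} $H$ on the disjoint union $A \sqcup B$ (realised as two Borel copies inside a standard Borel space), whose edges are precisely the pairs $(a,b)$ with $b = g \cdot a$ for some $g$ in the finite Hall set $F \sbt G$. Because $F$ is finite and the action of $G$ is free and Borel, $H$ has uniformly bounded degree and is Borel; moreover the hypothesis \ref{it:cs.a} says exactly that on $\mu$-almost every orbit, restriction of $H$ to $(A \cap G\cdot x, B \cap G\cdot x)$ satisfies the classical Hall condition, so admits some (possibly non-measurable) perfect matching $M_x$.

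The core of the argument is a Borel augmenting-path construction. I would start from an arbitrary Borel partial matching $M_0 \sbt H$ (e.g.\ produced from a Borel proper edge-colouring of $H$, available by Kechris--Solecki--Todor\v{c}evi\'c because of bounded degree). At stage $n$, the set $U_n \sbt A \cup B$ of points not covered by $M_n$ is Borel; the existence of a full orbit matching guarantees that from every $u \in U_n$ there issues an $M_n$-augmenting path in $H$. The task is to simultaneously flip a Borel family of such paths so that the measure $\mu(U_n)$ decays to zero. This is where the $G$-uniformity of $A$ and $B$ enters crucially: it gives a uniform positive lower bound $|A \cap F_n \cdot x|, |B \cap F_n \cdot x| \ge c n^d$, which upgrades Hall's inequality into quantitative expansion of neighbourhoods in $H$ on F{\o}lner blocks, and hence into the existence of augmenting paths of \emph{bounded length} for a positive density of uncovered vertices. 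Combined with a Borel toast / hyperfinite exhaustion of the orbit equivalence relation of $G \curvearrowright X$ (available since $G$ is amenable, in fact virtually $\Z^d$), this allows a measurable selection of disjoint bounded-length augmenting paths covering a definite fraction of $U_n$ at each stage.

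Iterating, one obtains a decreasing sequence $U_n$ of Borel sets with $\mu(U_n) \to 0$, and the symmetric-difference limit $M_\infty = \triangle_n M_n$ is a Borel perfect matching on a $G$-invariant conull set $X'$. Setting $A_g = \{a \in A \cap X' : (a, g \cdot a) \in M_\infty\}$ for each of the finitely many $g \in F$ then yields the desired measurable partition realising the equidecomposition, proving \ref{it:cs.c}. The principal obstacle, and the original technical contribution of Cie\'sla--Sabok, is the quantitative augmenting-path step: showing that $G$-uniformity is strong enough to force, at every stage, a uniformly positive measurable fraction of $U_n$ to admit pairwise-disjoint bounded-length alternating paths to the complementary side; the rest of the scheme is a Borel-combinatorial packaging of a classical iterative matching algorithm.
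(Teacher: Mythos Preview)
The paper does not prove this theorem. \thmref{thm:cs22} is stated as a quotation of \cite[Theorem~2]{CS22} and is used as a black box; the only part addressed in the paper is the equivalence \ref{it:cs.a}~$\Leftrightarrow$~\ref{it:cs.b}, which is exactly \propref{prop:m.hall}, and the text immediately following the theorem says so explicitly. There is therefore no ``paper's own proof'' of the implication \ref{it:cs.a}~$\Rightarrow$~\ref{it:cs.c} to compare against.

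Your sketch is a plausible high-level outline of the Cie\'sla--Sabok argument (Borel bounded-degree graph, augmenting paths of bounded length extracted via a F{\o}lner/toast structure, $G$-uniformity supplying the quantitative expansion needed to shrink the unmatched set geometrically), and you correctly identify where the real work lies. But since the present paper neither reproduces nor attempts this argument, your write-up goes well beyond what is required here: for the purposes of this paper it would suffice to note, as the authors do, that \ref{it:cs.c}~$\Rightarrow$~\ref{it:cs.b} is trivial, \ref{it:cs.b}~$\Leftrightarrow$~\ref{it:cs.a} is \propref{prop:m.hall}, and \ref{it:cs.a}~$\Rightarrow$~\ref{it:cs.c} is the content of \cite{CS22}.
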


The equivalence of \ref{it:cs.a} and \ref{it:cs.b}
was given in \propref{prop:m.hall}. 
\thmref{thm:cs22} asserts that these conditions
are also equivalent to \ref{it:cs.c}.
This result will be used below.


\section{Bounded remainder sets}\label{s:brem-sets}

\subsection{}
If $A \sbt \R^d$ is a bounded measurable set, we use  $\1_A$ to denote its indicator function, and we let
\begin{equation}
\label{eq:chi.def}
 \chi_A(x) = \sum_{m \in \Z^d} \1_A(x+m), \quad x \in \R^d,
\end{equation}
be the multiplicity function of the projection of $A$ onto $\T^d = \R^d / \Z^d$.

Let $\alpha = (\alpha_1, \alpha_2, \ldots , \alpha_d)$ be a fixed real vector. We shall assume that the vector $\al$ is \emph{totally irrational}, which by definition means that the numbers $1 , \alpha_1, \alpha_2, \ldots , \alpha_d$ are linearly independent over the rationals.

A bounded measurable set $A \sbt \R^d$ is called a
\emph{bounded remainder set} (BRS) with respect to the vector $\al$ if there is a constant $C= C(A, \al)$ such that
\begin{equation}
\label{eq:brs.def}
\Big| \sum_{k=0}^{n-1} \chi_A(x + k\al) - n \mes A \Big| \leq C
 \quad (n=1,2,3,\dots) 
 \quad \text{a.e.} 
\end{equation}
Here and below, we use $\mes A$ to denote the Lebesgue measure of the set $A$.

Bounded remainder sets form a classical topic in
discrepancy theory, see \cite{GL15} for an overview
of the subject and a survey of basic results.

\subsection{}
It is easy to show that if two bounded measurable  sets $A, B \sbt \R^d$
 are equidecomposable up to measure zero
  using only translations by vectors in $\Z \al + \Z^d$,
and if $A$ is a bounded remainder set, then so is $B$,
see \cite[Proposition 4.1]{GL15}.  

A question posed in \cite[Section 7.2]{GL15} asks whether
a converse statement holds in the following sense:
Let  $A, B \sbt \R^d$ be 
two bounded remainder sets of the same measure.
Is  it true that $A$ and $B$ must be equidecomposable
(up to measure zero, with  measurable pieces)
using translations by vectors in $\Z \al + \Z^d$ only?

It was proved in \cite[Theorem 2]{GL15} that the answer
is affirmative if the sets $A, B$ are assumed to be
Riemann measurable, and moreover,  in this case there
exists  an equidecomposition with Riemann measurable pieces.
It was also proved that if $A,B$ are polytopes,
then the pieces can also be chosen to be polytopes.

However, the question has remained open
in the general case. 
Our goal here is to answer this question affirmatively.

\begin{thm}
\label{thm:brs.equi}
Let $A, B \sbt \R^d$ be 
two  bounded remainder sets with respect to a totally irrational vector $\al$, and suppose that $A$ and $B$ have the same measure. Then $A$ and $B$ are equidecomposable
up to measure zero with measurable pieces,
using translations by vectors in $\Z \al + \Z^d$.
\end{thm}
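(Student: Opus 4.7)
The plan is to apply the measurable Hall's theorem (\thmref{thm:cs22}) after descending to a compact quotient of $\R^d$ on which $G := \Z\al + \Z^d$ becomes a finitely generated abelian group. Since translating $A$ or $B$ by any vector in $\Z^d \sbt G$ preserves $G$-equidecomposability, I would choose $N \in \N$ large enough that, after such translations, both $A$ and $B$ lie in the fundamental domain $[0,N)^d$ of $N\Z^d$. Let $X := \R^d/N\Z^d$ with normalized Lebesgue measure, a standard Borel probability space; the action of $G$ by translations then descends to the quotient group $G_N := G/N\Z^d \cong \Z \times (\Z/N)^d$. Rational independence of $1, \al_1, \dots, \al_d$ makes this action free, and any translation action is Lebesgue measure preserving. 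Let $\tilde A, \tilde B \sbt X$ denote the images of $A, B$; these are measurable sets of equal measure.

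Any measurable $G_N$-equidecomposition of $\tilde A, \tilde B$ up to measure zero lifts to a measurable $G$-equidecomposition of $A, B$ up to measure zero: a piece $\tilde A_i$ sent to $\tilde A_i + \bar g_i$ lifts, after fixing a representative $g_i \in G$ of $\bar g_i$, to a map $x \mapsto x + g_i + n(x)$ where $n \colon A_i \to N\Z^d$ is measurable and, by boundedness of $A$ and $B$, takes only finitely many values; subdividing along level sets of $n$ yields a $G$-equidecomposition. Hence it is enough to verify the hypotheses of \thmref{thm:cs22} for $\tilde A, \tilde B$ with the group $G_N$.

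The $G_N$-uniformity follows directly from the BRS property: for the sets $F_n := \{0, 1, \dots, n-1\} \times (\Z/N)^d$ and any $\tilde x \in X$ projecting to $x \in \T^d$, a direct count yields
\[
|\tilde A \cap (F_n \cdot \tilde x)| = \sum_{k=0}^{n-1} \chi_A(x + k\al) \geq n \mes A - C
\]
by \eqref{eq:brs.def}, and similarly for $\tilde B$, giving uniformity with exponent $1$ matching the $\Z$-rank of $G_N$. For Hall's condition a.e., combining the BRS estimates for $A$ and $B$ and using $\mes A = \mes B$ yields the bounded-discrepancy estimate
\[
\Big| \sum_{k=K_0}^{K_1} \bigl( \chi_A(x+k\al) - \chi_B(x+k\al) \bigr) \Big| \leq 2C
\]
for every interval $[K_0, K_1] \sbt \Z$ and a.e.\ $x \in \T^d$, hence a.e.\ $\tilde x \in X$.

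A classical monotone matching argument then produces, in almost every orbit, a bijection between the points of $\tilde A$ and $\tilde B$ in $G_N \cdot \tilde x$: enumerate the supply in $\tilde A$ and the demand in $\tilde B$ in increasing order of the $\Z$-coordinate (breaking ties arbitrarily within each $(\Z/N)^d$-fiber) and match by rank. The bounded-discrepancy estimate above, combined with the BRS estimate for $B$, ensures that the $\Z$-coordinates of matched points differ by at most some constant $L_0 = O(C/\mes A)$. This yields Hall's condition with the finite set $F := \{-L_0, \dots, L_0\} \times (\Z/N)^d$, so \thmref{thm:cs22} furnishes the required measurable $G_N$-equidecomposition. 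The main obstacle is turning the bounded-discrepancy estimate into a uniform $\Z$-displacement bound for the matching; uniformity across a.e.\ orbit is the point where one must exploit that the BRS constant $C$ and $\mes A$ do not depend on the orbit.
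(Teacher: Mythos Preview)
Your proposal is correct and follows essentially the same approach as the paper's proof. The only cosmetic difference is the choice of compact model: you pass to the torus $X=\R^d/N\Z^d$ with the group $G_N\cong\Z\times(\Z/N)^d$, while the paper works on $X=\T^d\times\Z_q$ with the group $\Z\times\Z_q$, encoding the multiplicity of the projection $A\to\T^d$ via a finite cyclic factor rather than a larger torus; after that, the verification of freeness, $G$-uniformity, the rank-matching argument bounding the $\Z$-displacement by $O(C/\mes A)$, the application of \thmref{thm:cs22}, and the lift back to $\R^d$ are carried out in the same way.
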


It follows that equidecomposability provides a method for constructing all
bounded remainder sets.
We also note that, as mentioned  in \cite[Section 7.2]{GL15},
this result allows to extend 
\cite[Theorem 5]{GL15} to all bounded remainder sets.

We now turn to the details of the proof.
In what follows, we assume that the sets $A$ and
$B$ both have positive measure (otherwise we have
nothing to prove).

\subsection{}
 Since $A,B$ are bounded subsets
of $\R^d$,  we can choose a
sufficiently large positive integer $q$ 
and vectors
$r_1, \dots, r_q \in \Z^d$
such that, if we denote
$Q = [0,1)^d$, then the cubes
$Q + r_1, \dots, Q + r_q$ 
are disjoint and their union 
covers both   $A$ and $B$. 
This induces a partition of each set $A$ and $B$
 into subsets 
 $A_i := A \cap (Q + r_i)$ and
 $B_i := B \cap (Q + r_i)$, $1 \le i \le q$.

Let $\Z_q := \Z / q\Z$ be the cyclic group of order
$q$, endowed with its probability Haar measure
assigning the mass $1/q$ to each element.

Now consider the product space
$X = \T^d \times \Z_q$ and denote by
$\mu$ the product probability measure on $X$.
We also consider the finitely generated abelian group 
$G = \Z \times \Z_q$. It induces
a free pmp  action on $X$, where 
the action of the element
$(n,\sig) \in G$ on the point $(x, \tau) \in X$
is given by 
$(n,\sig) \cdot (x, \tau)  = (x + n \al, \sig +\tau )$.

Next, we define two measurable sets $\alta, \altb \sbt X$  by
\begin{equation}
\label{eq:p.1.5}
\alta = \bigcup_{i=1}^{q} A_i \times \{i\}, 
\quad 
\altb = \bigcup_{i=1}^{q} B_i \times \{i\}.
\end{equation}
Here we identify the sets $A_i$ and $B_i$ with their
projections on $\T^d$, which we may do since both
$A_i$ and $B_i$ are contained in the cube $Q + r_i$.

We claim that the 
sets $\alta$ and $\altb$ are $G$-equidecomposable up to measure zero, with possibly non-measurable pieces. It suffices to show that
there is a finite set $F \sbt G$
and a full measure subset $X' \sbt X$, such that
for every point $(x, \tau) \in X'$ there exists a bijection
from $ \alta \cap (G \cdot (x, \tau))$ onto
$ \altb \cap (G \cdot (x, \tau))$
that moves elements using only actions
of the set $F$.

To prove this, we will use a technique similar to
\cite[Section 6.2]{GL18}.

\subsubsection{}
Since $A$ is a  bounded remainder set,
it follows  from \cite[Proposition 2.3]{GL15}
that there is a constant $C$ and 
a full measure subset $\Om \sbt \T^d$
such that
\begin{equation}
\label{eq:brs.x.a.1}
\sup_{n>0} \, \sup_{j\in\Z} \, 
\Big| \sum_{k=j+1}^{j+n} \chi_A(x+k\alpha) - n \mes A \Big| \le C,
\quad x \in \Om.
\end{equation}

The set $X' = \Om \times \Z_q$ is a full measure subset of $X$.
We now fix a point $(x, \tau) \in X'$ 
and consider the set $\alta \cap (G \cdot (x, \tau))$.
We construct an enumeration of
the elements of this set in the following way. Define
\begin{equation}
\label{eq:p.1.6}
A^n = A \cap (x + n \al + \Z^d), \quad n \in \Z,
\end{equation}
and let $\{s_n\}$, $n \in \Z$, be a sequence of integers 
such that 
\begin{equation}
\label{eq:p.1.7}
s_0 = 0, \quad s_{n+1} - s_n = \# A^n
\end{equation}
(we note that each $A^n$ is a finite set, 
and that some of the sets $A^n$ may be empty).
For each $n \in \Z$ we then choose some
enumeration $\{a_j\}$, $s_n \le j < s_{n+1}$, 
of the points in the set $A^n$. We also observe that,
since $A_1, \dots, A_q$ form a partition of $A$,
for each $j$ there is a unique element
 $ \sig_j  \in \{1, \dots, q\}$ such that
$a_j \in A_{\sig_j}$. It is now easy to check that
the sequence $\{(a_j, \sig_j)\}$, $j \in \Z$, 
forms an enumeration of
the set $\alta \cap (G \cdot (x, \tau))$.

We now claim that
\begin{equation}
\label{eq:p.2.3.9}
|s_n  - n \mes A | \le C, \quad n \in \Z.
\end{equation}
Indeed, by \eqref{eq:p.1.6}, \eqref{eq:p.1.7} we have
the equality 
$s_{k+1} - s_k = \chi_A(x + k \al)$. If we sum
this equality over $0 \le k \le n-1$ and use \eqref{eq:brs.x.a.1},
we obtain that \eqref{eq:p.2.3.9} holds for  $n > 0$.
In the  case $n < 0$ we establish 
\eqref{eq:p.2.3.9} similarly, by summing
the equality over $n \le k \le -1$.

\subsubsection{}
In a similar way, we define
\begin{equation}
\label{eq:p.1.9}
B^m = B \cap (x + m \al + \Z^d), \quad m \in \Z,
\end{equation}
and let $\{t_m\}$, $m \in \Z$, be a sequence of integers 
such that 
\begin{equation}
\label{eq:p.2.1}
t_0 = 0, \quad 
t_{m+1} - t_m = \# B^m.
\end{equation}
We  choose an 
enumeration $\{b_j\}$, $t_m \le j < t_{m+1}$, 
of the points in the set $B^m$, and let
 $ \tau_j  \in \{1, \dots, q\}$ be
 the unique element  such that
$b_j \in B_{\tau_j}$. We thus obtain an enumeration
$\{(b_j, \tau_j)\}$, $j \in \Z$, of
the set $\altb \cap (G \cdot (x, \tau))$.

Moreover, since $B$ is a  bounded remainder set,
we may assume that the constant $C$ and the
full measure subset $\Om \sbt \T^d$ have been chosen 
such that we have
\begin{equation}
\label{eq:p.2.3.11}
|t_m -  m \mes B | \le C, \quad m \in \Z.
\end{equation}

\subsubsection{}
We now claim that there exists a finite set $E \sbt \Z$,
which does not depend on the point $(x, \tau)$,
such that 
\begin{equation}
\label{eq:p.2.8}
b_j - a_j \in E \al + \Z^d, \quad j \in \Z. 
\end{equation}

Indeed, given $j$ there exist $n,m$ such that 
$a_j \in A^n$ and $b_j \in B^m$. Hence
\begin{equation}
\label{eq:p.4.1}
b_j - a_j \in  (m-n) \al + \Z^d
\end{equation}
which follows from  \eqref{eq:p.1.6}, \eqref{eq:p.1.9}.
We now write
\begin{equation}
\label{eq:p.2.7}
m - n = \Big(m - \frac{t_m}{\mes B}\Big) 
+ \Big( \frac{t_m}{\mes B} - \frac{s_n}{\mes A} \Big)
 +  \Big( \frac{s_n}{\mes A} - n\Big).
\end{equation}
Due to \eqref{eq:p.2.3.9} and \eqref{eq:p.2.3.11}, 
the first and third terms on the right hand side
are bounded in modulus by a certain constant
$K_1 = K_1(A,B)$. 
To estimate the second term, note that 
$s_n \le j < s_{n+1}$ and 
$s_{n+1} - s_n = \# A^n$
which cannot exceed $q$, hence
$0 \le j - s_n < q$. In a similar way, 
$0 \le j - t_m < q$. As a consequence,
$|t_m - s_n| < q$. Since 
$A$ and $B$ have the same measure,
it then follows that also the second term on the right hand side
of \eqref{eq:p.2.7} is bounded in modulus by some constant
$K_2 = K_2(A,B)$. 
We conclude that $m-n$  lies in some finite set $E \sbt \Z$ 
that does not depend on the point $(x, \tau)$.
Hence, \eqref{eq:p.4.1} implies \eqref{eq:p.2.8}.

\subsubsection{}
We now define  $F := E \times \Z_q$, which
is  a finite subset of $G$.  
It follows from 
 \eqref{eq:p.2.8} that for each $j \in \Z$,
 the two points
$(a_j, \sig_j)$ and  $(b_j, \tau_j)$
of the space $X$ differ by an element of the set 
$E \al  \times \Z_q$.
In other words, this means that
$(b_j, \tau_j) \in F \cdot (a_j, \sig_j)$.
As the sequence
$\{(a_j, \sig_j)\}$ is an enumeration of
 $\alta \cap (G \cdot (x, \tau))$,
while the sequence $\{(b_j, \tau_j)\}$ is an enumeration of
 $\altb \cap (G \cdot (x, \tau))$,
this shows that there exists a bijection
from $ \alta \cap (G \cdot (x, \tau))$ onto
$ \altb \cap (G \cdot (x, \tau))$
that moves elements using only actions of the set $F$.
As this holds for every $(x,\tau) \in X' = \Om \times \Z_q$
 which is a full measure subset of $X$, and since the
 finite set $F$ does not depend on the point
$(x,\tau) $, it follows that
 $\alta, \altb$ are $G$-equidecomposable up to measure zero, with possibly non-measurable pieces.

\subsection{}
We now wish to invoke \thmref{thm:cs22} in order to
conclude that the two sets $\alta$ and $\altb$
are $G$-equidecomposable up to measure zero
with measurable pieces.
To this end, we need to verify that the sets
$\alta$ and $\altb$ are $G$-uniform.

Let $F_n := \{0, 1, \dots, n-1\} \times \Z_q$.
To prove that $\alta$ is  $G$-uniform, we need
to show that there are positive
constants $c$ and
$n_0$, such that for all $(x, \tau)$ in some
full measure subset of $X$ and for every $n > n_0$, we have
\begin{equation}
\label{eq:p.5.1}
|\alta \cap (F_n \cdot (x, \tau))| \ge c n.
\end{equation}
We check that this holds 
for all $(x, \tau) \in X' = \Om \times \Z_q$.
Indeed, observe that the elements of the set
$\alta \cap (F_n \cdot (x, \tau))$ are given in our
enumeration as $\{(a_j, \sig_j)\}$, $s_0 \le j < s_n$,
and therefore this set contains exactly $s_n$ elements.
In turn, it follows from \eqref{eq:p.2.3.9} that
we have $s_n \ge n \mes A - C$.
Hence, we can choose 
 $c>0$ small enough and $n_0$
 large enough, not depending on
the point $(x, \tau)$, 
such that \eqref{eq:p.5.1} holds for every $n > n_0$.
This shows that $\alta$ is a $G$-uniform set.

In a similar way, it can be shown that also the
set $\altb$ is $G$-uniform.

\subsection{}
We can therefore apply \thmref{thm:cs22}
and conclude that the two sets $\alta$ and $\altb$
are $G$-equidecomposable up to measure zero with measurable pieces.
Finally, we need to show that this implies  that 
$A, B \sbt \R^d$ are equidecomposable
up to measure zero with measurable pieces,
using only translations by vectors in $\Z \al + \Z^d$.

First, by refining the pieces in the
equidecomposition if needed,
we may assume that each piece of $\alta$
is entirely contained in one of the sets
$A_i \times \{i\}$, $1 \le i \le q$.
Hence, if $P'$ is one of the pieces of 
$\alta$, then $P' = P \times \{i\}$ for some 
$i \in \{1, \dots, q\}$ and for some measurable set $P
 \sbt A_i = A \cap (Q + r_i)$.
The piece $P'$ is carried by some element 
$(n, \sig) \in G$  onto a piece 
$R'$ of the set $\altb$.
If we choose $j \in \{1, \dots, q\}$ such that
$j = i + \sig \pmod{q}$, then $R' = R \times \{j\}$
for some measurable set $R
 \sbt B_j = B \cap (Q + r_j)$.
The fact that $(n, \sig) \cdot P' = R'$ implies that
$P$ and $R$ are equidecomposable 
using translations by vectors from $n \al + \Z^d$.
It remains to note that as $P'$ goes
through all the pieces of $\alta$, the corresponding
sets $\{P\}$ and $\{R\}$ form partitions
of $A$ and $B$ respectively, up to measure zero.
It thus follows that 
$A$ and $B$ are equidecomposable
up to measure zero with measurable pieces,
using translations by vectors in $\Z \al + \Z^d$.


\section{Bounded distance equivalent cut-and-project sets}
\label{s:bde}

\subsection{}
Two discrete point sets $\Lambda,
\Lambda' \sbt \mathbb{R}^m$ are said to be
 \emph{bounded distance equivalent} with constant $K>0$
 if there exists a bijection $\chi :  \Lambda \to \Lambda'$ 
 satisfying
\begin{equation}
| \chi(\lambda) - \lambda | \le K, \quad \lam \in \Lam.
\end{equation}
We indicate this using 
the shorthand notation $\Lambda \bde \Lambda'$.

Let $\Gam$ be  a lattice in $\mathbb{R}^m \times \mathbb{R}^n$.
Denoting the projections from $\mathbb{R}^m \times \mathbb{R}^n$ onto $\mathbb{R}^m$ and $\mathbb{R}^n$ by $p_1$ and $p_2$ respectively, we assume that $p_1 |_{\Gamma}$ is injective, and that the image $p_2(\Gamma)$ is dense in $\mathbb{R}^n$. If $W \sbt \R^n$ is a bounded set  (called a ``window'') then the set
\begin{equation}
\Lambda (\Gamma, W) = \{ p_1(\gamma)  :  \gamma \in \Gamma , \, p_2(\gamma) \in W \} 
\end{equation}
is called 
the \emph{cut-and-project set}, or the \emph{model set}, in $\mathbb{R}^m$ obtained from the lattice $\Gam$ and the window $W$.

It is well-known that a one-dimensional model set $\Lambda (\Gamma, W) \sbt \R$ constructed from a Riemann measurable window $W$ is bounded distance equivalent to an arithmetic progression if and only if, after an appropriate linear transformation, $W$ is a bounded remainder set with respect to a totally irrational vector associated with the lattice $\Gam$, see \cite{HK16}, \cite{HKK17},
\cite[Section 6]{GL18}, \cite[Theorem 4.5]{FG18}.
This suggests that results on bounded remainder sets may admit natural extensions for model sets. 

For instance, \cite[Theorem 1]{GL15} states that any parallelepiped $W \sbt \R^d$ 
spanned by linearly independent vectors in $\Z\alpha + \Z^d$ is a bounded 
remainder set with respect to the totally irrational vector $\al$. This can be seen as a 
special case of \cite[Theorem 3.1]{DO90} which  states that if the parallelepiped $W$ is 
 spanned by vectors in $p_2(\Gam)$, then the
model set $\Lam(\Gam,W)  $ is  bounded distance equivalent to a lattice.

This analogy prompts the question as to whether Theorem \ref{thm:brs.equi} admits (at least, for Riemann measurable sets) an extension to model sets. An affirmative answer would provide a converse to \cite[Theorem 6.1]{FG18} which states that for any finite partition $W=\cup W_i$, 
and for any vectors $\gamma_i \in \Gamma$ 
such that the pieces $W_i + p_2(\gamma_i)$
are disjoint, the two model sets $\Lambda(\Gamma, W)$ and $\Lambda (\Gamma, \cup (W_i + p_2(\gamma_i)))$ are bounded distance equivalent.

Our next theorem provides such a converse result.

\begin{theorem}
\label{thm:gre25.main}
Let $W, W' \sbt \R^n$ be two bounded Riemann measurable sets
of positive measure.
 If the model sets $\Lambda(\Gamma, W)$ and $\Lambda(\Gamma, W')$
  are bounded distance equivalent, then $W, W'$ are
equidecomposable up to measure zero with measurable pieces,
 using only translations by vectors from $p_2(\Gamma)$. 
\end{theorem}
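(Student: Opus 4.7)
My plan is to adapt the proof of \thmref{thm:brs.equi} to the cut-and-project setting via a dynamical system built from an auxiliary sublattice of $p_2(\Gam)$.

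First, I would extract a finite translation vocabulary from the bounded distance equivalence. The bijection $\chi : \Lam(\Gam, W) \to \Lam(\Gam, W')$ with displacement at most some $K$ lifts, via injectivity of $p_1|_\Gam$, to a bijection $\tilde\chi$ between $\{\gam \in \Gam : p_2(\gam) \in W\}$ and $\{\gam \in \Gam : p_2(\gam) \in W'\}$; its displacement $\tilde\chi(\gam) - \gam \in \Gam$ has $p_1$-component bounded by $K$ and $p_2$-component in the bounded set $W' - W$, so by discreteness of $\Gam$ it takes values in a finite set $E \sbt \Gam$. The candidate finite translation vocabulary is then $F := p_2(E) \sbt p_2(\Gam)$.

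Next, I would set up the dynamical system. Assuming aperiodicity of the scheme, so that $p_2|_\Gam$ is also injective, choose a saturated rank-$n$ sublattice $L \sbt p_2(\Gam)$, obtained by selecting $n$ $\R$-linearly independent elements from the $p_2$-image of a basis of $\Gam$ and then saturating. Set $X := \T^n = \R^n / L$ with normalized Haar measure $\mu$, and let $G := p_2(\Gam)/L \cong \Z^m$, a finitely generated abelian group acting freely and pmp on $X$ by rotations. Since $W, W'$ are bounded, they can be covered by finitely many $L$-translates $\{D_L + r_i\}_{i=1}^{q}$ of a fundamental domain $D_L$ of $L$; following \eqref{eq:p.1.5}, define lifts $\tilde W, \tilde W' \sbt X \times \Z_q$ and consider the action of $G \times \Z_q$ on $X \times \Z_q$. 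The point of choosing $L \sbt p_2(\Gam)$ is that the $\Z_q$-reshuffling in the resulting equidecomposition contributes shifts by $r_j - r_i \in L \sbt p_2(\Gam)$, so that when combined with the $G$-translations and unwound, the final equidecomposition uses only translations in $p_2(\Gam)$.

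Verifying Hall's condition a.e.\ with the finite set $F \times \Z_q$ would then proceed by enumerating the orbit intersections $\tilde W \cap ((G \times \Z_q) \cdot ([y], \tau))$ and $\tilde W' \cap ((G \times \Z_q) \cdot ([y], \tau))$ analogously to the sequences $\{a_j\}, \{b_j\}$ in the proof of \thmref{thm:brs.equi}, and establishing $b_j - a_j \in F \times \Z_q$ via an argument paralleling \eqref{eq:p.2.7}--\eqref{eq:p.4.1}. These orbit intersections parametrize the points of ``shifted'' model sets $\Lam(\Gam, W - y)$ and $\Lam(\Gam, W' - y)$, which, by the characterization of bounded distance equivalence as bounded discrepancy for Riemann measurable windows \cite{FG18}, admit a uniform (in the base point $[y]$) discrepancy bound playing the role of \eqref{eq:brs.x.a.1}. $G$-uniformity of $\tilde W, \tilde W'$ follows from the pointwise ergodic theorem for the free pmp $\Z^m$-action of $G$ on $X$ together with $\mu(\tilde W), \mu(\tilde W') > 0$ (which uses $\mes W, \mes W' > 0$). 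Applying \thmref{thm:cs22} then yields the measurable $(G \times \Z_q)$-equidecomposition, which unwinds as in the final paragraph of the proof of \thmref{thm:brs.equi} to give the sought equidecomposition of $W, W'$ with measurable pieces using translations in $p_2(\Gam)$.

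I expect the main obstacle to be establishing the uniform discrepancy bound used in the verification of Hall's condition: in \thmref{thm:brs.equi} this came directly from the bounded remainder definition \eqref{eq:brs.def} via a classical Koksma-type estimate, whereas here it must be extracted from the bounded distance equivalence assumption together with Riemann measurability of $W, W'$, using the results of \cite{FG18}---this is precisely where the Riemann measurability hypothesis is used.
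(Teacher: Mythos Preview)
Your overall architecture matches the paper's: pass to a torus $\R^n/H$ with an induced action of $p_2(\Gamma)/H$, verify Hall's condition a.e., verify $G$-uniformity, and invoke \thmref{thm:cs22}. The decisive step, however, is the verification of Hall's condition a.e., and here your proposal has a genuine gap.

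The enumeration argument of \eqref{eq:p.1.6}--\eqref{eq:p.2.8} hinges on the free part of the action being rank one: the sequences $\{s_n\}$, $\{t_m\}$ are cumulative counts along a $\Z$-orbit, and \eqref{eq:p.2.7} compares two \emph{integers} $m, n$ through the common scalar index $j$. In your setup $G \cong \Z^m$ with $m$ the dimension of the physical space, and for $m > 1$ there is no linear ordering of the orbit and no analogue of $s_n, t_m$ or of \eqref{eq:p.2.7}. Your appeal to \cite{FG18} for a ``characterization of bounded distance equivalence as bounded discrepancy'' applies only to one-dimensional model sets; that paper does not supply the higher-dimensional uniform bound you need. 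What is actually required is that for a.e.\ $y$ the shifted lifted sets are bounded distance equivalent \emph{with the same constant $K$}, and the paper proves precisely this (\lemref{lm:l.1.3}) by a compactness argument rather than a discrepancy estimate: approximate $y$ by $p_2(\gamma_j)$ with $\gamma_j \in L$, note that the given bijection $\chi$ restricted to a finite set $S + \gamma_j \subset L_W$ takes displacement values in a fixed finite set, pass to a subsequence so that these values stabilize, and verify that the limit map is injective into $L_{W'-y}$ using that $y$ avoids $\partial W$ and $\partial W'$ --- this last point being exactly where Riemann measurability is used, to make such $y$ a full-measure set.

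Two further points. First, you assume $p_2|_\Gamma$ injective, which is not part of the hypothesis; the paper handles the kernel by writing $\Gamma = L \oplus N$ with $N = \ker p_2|_\Gamma$ and applying a product-cancellation lemma (\lemref{lm:product-bde}) to pass from $\Gamma_W \bde \Gamma_{W'}$ to $L_W \bde L_{W'}$. Second, the pointwise ergodic theorem does not yield $G$-uniformity: it produces, for a.e.\ $x$, a threshold $n_0(x)$, whereas the definition demands a single $n_0$ independent of $x$. The paper instead observes that a Riemann measurable set of positive measure contains an open ball and combines this with an $\eps$-net argument to obtain uniform constants. Finally, note that the paper dispenses with the $\Z_q$ auxiliary factor altogether by choosing the sublattice $H \subset p_2(\Gamma)$ large enough that $W$ and $W'$ already fit inside a single fundamental domain.
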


This result was previously announced in
\cite[Theorem 1.1]{Gre25} but the proof given there
was found to contain a gap, see \cite{Gre26}. 
The remainder of this section is devoted to a new 
proof of \thmref{thm:gre25.main} based on
the measurable Hall's theorem.

We note that in \cite{Gre25} the additional claim was made
that if $W, W'$ are polytopes, then the pieces of the 
equidecomposition can also be chosen to be polytopes.
The new proof given below 
\emph{does not} establish this additional claim.
In fact, we will see in \secref{s:1d} that this claim is valid only for \emph{one-dimensional} model sets. 
As a consequence, several results
announced in \cite{Gre25}  are false; we describe these results in  Appendix \ref{sec:gre25}.

\subsection{}
We now turn to the proof of \thmref{thm:gre25.main}.
Assume that the model sets
$\Lambda(\Gamma, W)$ and $\Lambda(\Gamma, W')$ 
are bounded distance equivalent, i.e.\ there
is a constant $M > 0$ and a bijection
$\pphi: \Lambda(\Gamma, W) \to \Lambda(\Gamma, W')$ 
which moves points by distance at most $M$.
As in the original proof given in \cite[Section 3]{Gre25}, we introduce the ``lifted'' sets 
\begin{equation}
\label{eq:lf.2.1}
\Gamma_{W} = \{ \gamma \in \Gamma  : p_2(\gamma) \in W \}, 
\quad
 \Gamma_{W'} = \{ \gamma \in \Gamma : p_2(\gamma) \in W' \},
\end{equation}
and consider a bijection
$\psi: \Gamma_W \to \Gamma_{W'}$ 
defined by the condition
$p_1(\psi (\gam)) = \pphi(p_1(\gam))$ for 
$\gam \in \Gam_W$. This condition
determines $\psi$ uniquely, since
 $p_1|_\Gam$ is an injective map.
 
We claim that there is a constant $K>0$ such that
$|\psi(\gam) - \gam| \le K$ for all  
$\gam \in \Gamma_W$, namely, $\psi$ is a bounded
distance equivalence map from
$\Gamma_W$ onto $\Gamma_W'$.
Indeed, observe that
$|p_1(\psi(\gam) - \gam)| 
= |\pphi(p_1(\gam)) - p_1(\gam)| \le M$; while
$p_2(\psi(\gam) - \gam) \in W' - W$
which is a bounded set in $\R^n$.
We thus conclude that the two sets
$\Gamma_W$ and $\Gamma_W'$
are bounded distance equivalent.

Let us denote $N  = \{ \gam \in \Gam : p_2(\gam) = 0\}$.
Then $N$ is a sublattice of $\Gam$
(remark that if $p_2|_\Gam$ is injective,
then   $N = \{0\}$).
In turn, there is a sublattice 
$L$  of $ \Gam$ such that we have  the direct
sum decomposition
\begin{equation}
\Gam = L \oplus N
\end{equation}
(see  \cite[I.2.2, Corollary 3]{Cas97}).
Then $p_2|_L$ is injective, and
$p_2(L) = p_2(\Gam)$.
 Define
\begin{equation}
L_W = \{ \gam \in  L : p_2(\gam) \in W\}, 
 \quad
L_{W'} = \{ \gam \in L : p_2(\gam) \in W'\},
\end{equation}
then it follows that
\begin{equation}
\label{eq:l.2.2}
\Gamma_W = L_W \oplus N, \quad \Gamma_{W'} = L_{W'} \oplus N.
\end{equation}

\subsection{}
We wish to prove that $L_W$ and $L_{W'}$
are bounded distance equivalent. 
We will obtain this as a consequence of the following lemma.

\begin{lemma}\label{lm:product-bde}
Let $A, B \subset \Z^r$ and suppose that $A\times\Z^s \bde B\times \Z^s$
with constant $K$. Then also $A \bde B$ with the same constant $K$.
\end{lemma}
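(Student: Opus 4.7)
The plan is to verify Hall's condition for the pair $(A, B)$ with the finite set of translations $F = \{v \in \Z^r : |v| \le K\}$, and then invoke \propref{prop:x.hall} to extract a bijection $A \to B$ moving each point by an element of $F$, which is precisely a witness of $A \bde B$ with constant $K$.

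By hypothesis there is a bijection $\chi : A \times \Z^s \to B \times \Z^s$ with Euclidean displacement at most $K$; in particular, both coordinate components of $\chi(x) - x$ have norm at most $K$. Fix a finite set $S \sbt A$, and for each integer $M \ge 1$ put $Q_M := \Z^s \cap [-M,M]^s$. The block $S \times Q_M$ has exactly $|S|(2M+1)^s$ elements, and its image under $\chi$ is contained in $(N_K(S) \cap B) \times Q_{M+K}$, where $N_K(S) := \{y \in \Z^r : \dist(y, S) \le K\}$. Injectivity of $\chi$ then yields
\[
|S|(2M+1)^s \le |N_K(S) \cap B|\,(2M+2K+1)^s.
\]
Dividing and letting $M \to \infty$, the ratio of box sizes tends to $1$, and since $|S|$ and $|N_K(S) \cap B|$ are fixed nonnegative integers, this forces $|S| \le |N_K(S) \cap B|$. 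Applying the same argument to $\chi^{-1}$, which has the same displacement bound, yields the symmetric inequality $|T| \le |N_K(T) \cap A|$ for finite $T \sbt B$.

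With Hall's condition verified for $A, B$ with the finite set $F$, the proof of \propref{prop:x.hall} (where the appeal to Tychonoff can be replaced by a diagonal argument since $A, B$ are countable) produces a bijection $A \to B$ sending each $a$ to a point of $a + F$, which is exactly $A \bde B$ with constant $K$. The only delicate step is the passage from the asymptotic real inequality to the exact integer inequality in Hall's condition; this is precisely what preserves the constant $K$ without any loss, and it works because $(2M+1)^s$ and $(2M+2K+1)^s$ are polynomials in $M$ of the same degree $s$, so their ratio tends to $1$ rather than to a value $>1$.
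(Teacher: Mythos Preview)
Your proof is correct and follows essentially the same approach as the paper: verify Hall's condition for $A,B$ with the finite set $F=\{v\in\Z^r:|v|\le K\}$ via a volume/counting argument over growing boxes in the $\Z^s$ factor, then invoke \propref{prop:x.hall}. The only cosmetic differences are your use of symmetric boxes $Q_M=[-M,M]^s\cap\Z^s$ in place of the paper's $\{0,\dots,R-1\}^s$, and the notation $N_K(S)$ for what the paper writes as $(S+F)$.
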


\begin{proof}
By assumption  there exists  a  bijection
$\chi:  A\times\Z^s \to B\times\Z^s$
that moves points by distance at most $K$.
We consider $A, B$ 
as subsets of $\Z^r$ viewed as a group acting 
on itself by translations.
To prove the claim it
 suffices to show 
 that $A,B$  are equidecomposable
using only actions of the finite set
 $F = \{j \in \Z^r : |j| \le K\}$.
 In turn, by
 \propref{prop:x.hall} it suffices to check
that  $A,B$  satisfy Hall's condition with the finite set $F$.
That is, we need to show that 
$|S| \le |(S + F) \cap B |$
for any  finite set $S \sbt A$, 
and that $|T| \le |(T - F) \cap A |$
for any  finite set $T \sbt B$.
We will  only check that the first condition holds, 
 as the second condition
can be established similarly.

Let $S \sbt A$ be a finite set. Then
for any positive integer $R$,  the bijection
 $\chi$ maps  the set
$S \times \{0,  \dots, R-1\}^s$ 
injectively into
$((S + F) \cap B ) \times \{-K, \dots, R+K-1\}^s$.
Hence
\begin{equation}
|S| \cdot R^s \le |(S + F) \cap B | \cdot (R + 2K)^s,
\end{equation}
and letting $R \to + \infty$ we conclude that
$|S| \le |(S + F) \cap B |$, as we had to show.
\end{proof}

Since $\Gam_W $ and $\Gam_{W'}$ 
are bounded distance equivalent,  
it follows from \eqref{eq:l.2.2}
that after applying a suitable invertible 
linear transformation,  we may use 
\lemref{lm:product-bde} in order to conclude that
also  $L_W$ and $L_{W'}$
are bounded distance equivalent.

\subsection{}
Let $K$ be the bounded distance equivalence constant of 
$L_W$ and $ L_{W'}$.

\begin{lemma}
\label{lm:l.1.3} 
If $x \in \R^n$ satisfies the condition
\begin{equation}
\label{eq:l.1.2}
(\partial W -x) \cap p_2(\Gamma) 
= (\partial W' -x) \cap p_2(\Gamma) = \emptyset,
\end{equation}
then
$ L_{W-x} \bde L_{W'-x} $
with the same constant $K$.
\end{lemma}

\begin{proof}
Let $F = \{\gam \in L : |\gam| \le  K \}$
which is a finite subset of  $L$.
Since $L_W \bde L_{W'}$ with constant $K$, there is  a  bijection
 $\chi: L_W \to L_{W'}$ 
 and a function $f : L_W \to F$ 
 such that 
 $\chi(\tau) = \tau + f(\tau)$ 
 for all $\tau \in L_W$.
Fix a point $x \in \R^n$ satisfying \eqref{eq:l.1.2}, and consider
the sets $A = L_{W-x}$ and $B = L_{W'-x}$
as subsets of $L$ viewed as a group acting on itself by translations.
It suffices to show 
 that $A,B$  are equidecomposable
using only actions of the finite set $F$.

In turn, by \propref{prop:x.hall} it suffices to check
that  $A,B$  satisfy Hall's condition with the finite set $F$.
We will do this by showing that given a finite set
$S \sbt A$ there is an
injective map $\pphi: S \to B$ 
satisfying $\pphi(\gam) \in \gam + F $ for all $\gam \in S$;
and  given a finite set
$T \sbt B$ there is an
injective map $\psi: T \to A$ 
satisfying $\psi(\gam) \in \gam - F $ for all $\gam \in T$.
We will only prove the first claim, as the second claim
can be proved similarly.

Let $S \sbt A = L_{W -x}$ be a finite set. 
Since the image $p_2(L) = p_2(\Gam)$ is dense in $\R^n$,
we may choose a sequence $\gam_j \in L$ such that
$x_j = p_2(\gamma_j) \to x$.
The assumption that $\partial W -x $ does not 
intersect $p_2(\Gamma)$ implies 
that the elements of the finite set $p_2(S)$ lie in
the interior of $W - x$. Hence,
there is $j_0$ such that 
$p_2(S) \sbt W - x_j$
for all $j > j_0$. This means that
\begin{equation} 
S \sbt L_{W - x_j} = L_W - \gam_j,
\end{equation}
and therefore for each $\gam \in S$ there is $\tau_j(\gam) \in L_W$
such that $\gam = \tau_j(\gam) - \gam_j$.
Since both $S$ and $F$ are finite sets, then by
 passing to a subsequence if needed we may assume that
 for each $\gam \in S$ the value
$f(\tau_j(\gam))$ does not depend on $j$, so there is a 
function $h : S \to F$ such that
$f(\tau_j(\gam)) = h(\gam)$ for every $j$ and every $\gam \in S$.
Define $\pphi(\gam) = \gam + h(\gam)$ for each
$\gam \in S$. It remains to show that $\pphi$
is an  injective map from $S$ into $B$.

We first check that $\pphi$ indeed maps  $S$ into $B$. 
Let $\gam \in S$, then
\begin{equation}
\label{eq:mp.1.1}
\pphi(\gam) = \gam + h(\gam) = \tau_j(\gam) - \gam_j + f(\tau_j(\gam))
= \chi(\tau_j(\gam)) - \gam_j.
\end{equation}
Since $\chi$ maps $L_W$ into $ L_{W'}$ then
\begin{equation}
p_2( \pphi(\gam) ) = 
p_2( \chi(\tau_j(\gam)) ) - x_j \in W' - x_j,
\end{equation}
and letting $j \to \infty$ we obtain
that $p_2(\pphi(\gam)  ) $ lies in the closure of
$ W' - x $. In turn, using
the assumption that $\partial W' - x$ does not 
intersect $p_2(\Gamma)$, we conclude
that  $p_2(\pphi(\gam)  ) $ must in fact lie
in the interior of $W' - x$. As a consequence,
$\pphi(\gam) \in L_{W' - x} = B$.

Lastly, we show that $\pphi$ is injective. Indeed, let
 $\gam, \gam' \in S$, then by  \eqref{eq:mp.1.1} we have
\begin{equation}
\label{eq:mp.1.2}
\pphi(\gam) = \chi(\tau_j(\gam)) - \gam_j, \quad
\pphi(\gam') = \chi(\tau_j(\gam')) - \gam_j.
\end{equation}
Hence, if we assume that
$\pphi(\gam) = \pphi(\gam')$
then $\chi(\tau_j(\gam)) = \chi(\tau_j(\gam')) $.
Since $\chi$ is an injective map, it follows that
$\tau_j(\gam) = \tau_j(\gam') $.
But recalling that 
$\gam = \tau_j(\gam) - \gam_j$
and
$\gam' = \tau_j(\gam') - \gam_j$
this implies that $\gam = \gam'$. 
Hence $\pphi$ is an  injective map,
and the lemma is proved.
\end{proof}

\subsection{}
Since $W$ and $W'$ are bounded sets in $\R^n$,
and since the image $p_2(\Gam)$ is dense in $\R^n$,
 we may choose a system of $n$ linearly independent 
 vectors $v_1, \ldots , v_n \in p_2(\Gamma)$ 
 which are large enough for $W$ and $W'$ to be contained in the 
 parallelepiped
 \begin{equation}
 \Om =  \{ t_1 v_1 + \dots + t_n v_n : t_1, \dots, t_n \in 
 [ -\tfrac1{2},  \tfrac1{2}) \}.
 \end{equation}
Let $H$ be the subgroup of $\R^n$
generated by the vectors $v_1, \dots, v_n$. Then
$H$ is  a lattice in $\R^n$ and a subgroup of $p_2(\Gamma)$,
and  $\Om$ is a 
fundamental domain of $H$ in $\R^n$.

We now consider the quotient space $X = \R^n/H$,  and let
$\mu$ be the  Lebesgue measure on $X$ normalized such that
$\mu(X)=1$.  Then $G = p_2(\Gamma) / H$ is a finitely generated abelian group
which induces a free pmp action on $(X, \mu)$ by translations.
Since $W, W'$ are contained in the 
fundamental domain $\Om$ of $H$, we may also view $W,W'$
as measurable subsets of $X$, and we observe that $W, W'$ are 
$G$-equidecomposable (up to measure zero) considered as subsets of $X$,
if and only if  $W, W'$ are 
$p_2(\Gam$)-equidecomposable (up to measure zero) as subsets of $\R^n$.

We now wish to prove that $W,W'$ (as subsets of $X$)
satisfy Hall's condition a.e.\ with respect to $G$.
It suffices to show that
there is a finite set $F \sbt \Gam$
and a full measure subset $X' \sbt X$, such that
for every point $x \in X'$ there exists a bijection
from $ W \cap (G + x) $ onto
$ W' \cap (G + x)$
that moves elements using only actions
of the set $p_2(F)$.

We choose $F := \{ \gam \in L : |\gam| \le K \}$ where 
$K$ is the bounded distance  equivalence constant of 
$L_W$ and $ L_{W'}$,  and we let
 $X'$ be the set of points $x \in X$ satisfying the condition \eqref{eq:l.1.2}
(note that this condition is invariant under translations by vectors in $H$,
so it may be viewed as a condition on elements of $X$).
Since $W$ and $W'$ are Riemann measurable sets, their 
boundaries  $\partial W$ and $\partial W'$ are both sets of measure zero,
which implies that  $X'$ is a full measure subset of $X$.

Fix $x \in X'$, and denote
$A = W \cap (G + x)$
and $B = W' \cap (G + x)$.
We observe that  the mapping
$\gam \mapsto p_2(\gam) + x \pmod{H}$
defines a bijection $\pphi: L_{W-x} \to A$,
as well as a bijection $\psi: L_{W'-x} \to B$.
We also recall that by
\lemref{lm:l.1.3} there is a bijection
$\chi:  L_{W-x} \to L_{W'-x} $
such that $\chi(\gam) - \gam \in F$ for all $\gam \in L_{W-x}$.
Hence $\psi \circ \chi \circ \pphi^{-1}$ 
defines a bijection from $A$ onto $B$ that moves
points using only actions
of the finite set $p_2(F)$.
We conclude that $W,W'$ 
satisfy Hall's condition a.e.\ with respect to $G$.

\subsection{}
We now wish to invoke \thmref{thm:cs22} in order to
conclude that the two sets $W, W'$ 
are $G$-equidecomposable up to measure zero
with measurable pieces (as subsets of $X$).
To this end, we need to verify that $W, W'$
are $G$-uniform sets.

By the structure theorem for finitely generated abelian groups,
there exists a direct sum decomposition
$G = M \oplus \Delta$ where $M$ is a free abelian group
of rank $d$, and $\Del$ is a finite abelian group.
We observe that since $p_2(\Gam)$ is dense in $\R^n$,
 then $G$ is dense in $X$.
In turn, this implies that also $M$ must be dense in $X$
(see \cite[Section 2.1]{Rud62}).

Let $e_1, \dots, e_d$ be some basis for $M$, and denote
\begin{equation}
F_k = P_k \oplus \Del, \quad
P_k =  \Big\{ \sum_{j=1}^{d} m_j e_j : m_1, \dots, m_d \in \{0, 1, \dots, k-1 \}  \Big\}.
 \end{equation}
  To prove that $W$ is a $G$-uniform set,
 we must show that there exist positive constants
 $c$ and $k_0$ such that for  almost all $x\in X$ 
 and every $k  > k_0$ we have 
\begin{equation}
\label{lb}
| W \cap (F_k + x) | \ge c k^d.
\end{equation}

Since $W$ is a Riemann measurable set of positive
measure, there is $\eps > 0$ such that $W$ contains
some open ball $U$ of radius $2\eps$.
Since $M$ is dense in $X$, there is a positive
 integer $k_0$ such that the set $P_{k_0}$
forms an $\eps$-net in $X$. This implies that also any translate of $P_{k_0}$
is an $\eps$-net in $X$. 
Now observe that for every $x \in X$ and every $k > k_0$,
the set $P_k  + x$ contains at least
$\lfloor k / k_0 \rfloor^d$ disjoint translated copies of $P_{k_0}$,
and each one of these translated copies must intersect 
the ball $U$. It follows that
\begin{equation}
| W \cap (F_k + x) | \ge | U \cap (P_k + x) | \ge 
\lfloor k / k_0 \rfloor^d \ge c k^d,
\end{equation}
which verifies condition \eqref{lb} and shows that
 $W$ is a $G$-uniform set.
In a similar way, one can show that also the set $W'$
is $G$-uniform.

Finally, by an application of \thmref{thm:cs22} we conclude 
that the two sets $W, W'$ are $G$-equidecomposable up to measure zero
 with measurable pieces as subsets of $X$. This implies that
$W, W'$ are $p_2(\Gamma)$-equidecomposable
up to measure zero with measurable pieces
as subsets of $\R^n$,
and completes the proof of \thmref{thm:gre25.main}.


\section{Equidecomposability with Riemann measurable and polytope pieces}
\label{s:1d}

\subsection{}

Notice that in the statement of
 \thmref{thm:gre25.main},
 the sets $W$ and $W'$
are assumed to be Riemann measurable,
yet  the result only guarantees their 
$p_2(\Gam)$-equidecom\-posability  with measurable pieces.
One may therefore ask whether the pieces in the equidecom\-position may be chosen so that they are also Riemann measurable.

A variant of the question,
which is perhaps even more important, 
is the following: if the sets $W$ and $W'$ in \thmref{thm:gre25.main}
are assumed to be polytopes, can the pieces in the
equidecomposition also be chosen to be polytopes?

Note that by
 a ``polytope" in $\R^d$ we mean any finite union 
of $d$-dimensional simplices with disjoint interiors. 
Thus a polytope may be non-convex, or even 
disconnected.

In this section, we first prove that for \emph{one-dimensional} model sets, the answer to both questions above is affirmative. We then show that for model sets in dimensions two and higher, at least the second question admits a \emph{negative} answer.

\subsection{One-dimensional cut-and-project sets}
Let $\Gamma$ be a lattice in $\mathbb{R} \times \mathbb{R}^d$, such that if $p_1$ and $p_2$ denote the projections from  $\mathbb{R} \times \mathbb{R}^d$ onto $\mathbb{R}$ and $\mathbb{R}^d$ respectively,  then $p_1 |_{\Gamma}$ is injective, 
 while $p_2(\Gamma)$ is dense in $\mathbb{R}^d$.
If $W \sbt \R^d$ is a bounded set, then again we consider the model set
in $\R$ defined by
\begin{equation}
\Lambda (\Gamma, W) = \{ p_1(\gamma)  :  \gamma \in \Gamma , \, p_2(\gamma) \in W \}.
\end{equation}

\begin{theorem}
\label{thm:onedim}
Let $W, W' \sbt \R^d$ be two bounded Riemann measurable sets (resp.\ two polytopes). 
If the one-dimensional model sets 
 $\Lambda(\Gamma, W)$ and $\Lambda(\Gamma, W')$ are bounded distance equivalent, 
 then $W, W'$ are  equidecomposable up to measure zero with Riemann measurable pieces
(resp.\ with polytope pieces)
 using only translations from $p_2(\Gamma)$.
\end{theorem}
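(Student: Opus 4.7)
My plan for \thmref{thm:onedim} is to give a direct constructive argument exploiting the linear order on $\R$, rather than invoking the measurable Hall's theorem. First, exactly as in the proof of \thmref{thm:gre25.main}, the bounded distance equivalence of $\Lambda(\Gam,W)$ and $\Lambda(\Gam,W')$, together with the injectivity of $p_1|_\Gam$ and the boundedness of $W,W'$, yields bounded distance equivalence of the lifted sets $\Gam_W$ and $\Gam_{W'}$ in $\Gam$. Because we are in dimension one, I may enumerate $\Gam_W=\{\gam_j\}_{j\in\Z}$ and $\Gam_{W'}=\{\gam'_j\}_{j\in\Z}$ in increasing order of the $p_1$-coordinate and take the canonical index-preserving bijection $\gam_j\mapsto\gam'_j$. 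Its displacements $\delta_j:=\gam_j-\gam'_j$ are bounded in the $p_1$-direction (by the bounded distance equivalence constant) and in the $p_2$-direction (because $W,W'$ are bounded), hence take values in a finite set $F\sbt\Gam$. This produces a partition $\Gam_W=\bigsqcup_{\delta\in F}\Gam_W^\delta$ with $\Gam_{W'}=\bigsqcup_{\delta\in F}(\Gam_W^\delta-\delta)$.

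Projecting by $p_2$ yields a partition of the countable dense subset $p_2(\Gam_W)\sbt W$ into pieces $W_\delta^\ast\sbt W\cap(W'+p_2(\delta))$, and the main task is to promote this to a partition of $W$ itself into Riemann measurable (respectively, polytope) pieces $\{V_\delta\}_{\delta\in F}$ for which $\{V_\delta-p_2(\delta)\}$ partitions $W'$ up to measure zero. I plan to realize each $V_\delta$ as a finite Boolean combination (intersections and differences) of translates of $W$ and $W'$ by elements of $p_2(F)\sbt p_2(\Gam)$; Jordan measurability (resp.\ polytope structure) is then automatic, since these classes are closed under finite Boolean operations and under translations. The correct Boolean rule must encode the choice that, among the candidates $\delta\in F$ with $w-p_2(\delta)\in W'$, selects the unique one given by the index-preserving matching at $w$. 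The verification that $\{V_\delta\}$ partitions $W$ up to measure zero and agrees with the matching on $p_2(\Gam_W)$ will rely on the unique ergodicity of the flow $t\mapsto(t,0)\bmod\Gam$ on $Y=(\R\times\R^d)/\Gam$, together with the Riemann measurability of $W,W'$, which forces the boundaries of the Boolean pieces to be Lebesgue null.

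The main obstacle lies in producing the explicit Boolean rule above, since the index-preserving matching is defined by a \emph{global} ordering on $\Gam$, while the rule we need is a \emph{finite-complexity} geometric condition on $\R^d$. I expect this to be handled inductively on $|F|$: at each stage, isolate a maximal piece of $W$ that can be sent by a single translation $p_2(\delta)$ into $W'$, remove it from $W$ and its image from $W'$, and iterate on the strictly smaller remaining windows; termination in finitely many steps follows because the displacements take values in the finite set $F$. In the polytope setting the same Boolean construction yields polytope pieces, and a standard simplicial subdivision then produces a finite union of simplices with disjoint interiors, as required by the definition of a polytope given at the start of the section.
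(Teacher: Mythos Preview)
Your plan correctly identifies the crux of the problem but does not resolve it, and the proposed resolution in the last paragraph fails.

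The greedy induction ``isolate a maximal piece of $W$ that can be sent by a single translation $p_2(\delta)$ into $W'$, remove it, and iterate'' does not work. Removing the maximal piece $V=W\cap(W'+p_2(\delta))$ and its image $V-p_2(\delta)$ can destroy the equidecomposability of what remains. Concretely, take $d=1$, $W=[0,3)$, $W'=[0,1)\cup[2,4)$, with the index-preserving matching using only the translations $0$ and $-1$ (so $p_2(F)=\{0,1\}$). The greedy step with translation $0$ removes $W\cap W'=[0,1)\cup[2,3)$ from $W$ and from $W'$, leaving $W_1=[1,2)$ and $W'_1=[3,4)$; these require translation by $-2$, which is not in $p_2(F)$. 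Thus the procedure neither reduces $|F|$ nor preserves the hypothesis, and your termination claim (``because the displacements take values in the finite set $F$'') is circular: the displacements of the \emph{original} matching lie in $F$, but after a greedy removal the remaining sets need not admit any matching with displacements in $F$ at all. The earlier part of your plan --- that the pieces of the index-preserving matching should be finite Boolean combinations of translates of $W,W'$ --- may well be true, but you have not produced the rule, and the greedy scheme does not produce it for you.

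The paper takes a completely different route that sidesteps this obstacle. It first reduces, by an invertible linear map of the form $(x,y)\mapsto(ax,By)$, to a lattice of the \emph{special form}
\[
\Gamma=\{(n+\beta^\top(n\alpha+m),\,n\alpha+m):n\in\Z,\ m\in\Z^d\},
\]
for which $p_2(\Gamma)=\Z\alpha+\Z^d$. For such $\Gamma$ one has $\nu(\Lambda(\Gamma,W),N)=\sum_{n=0}^{N-1}\chi_W(n\alpha)+O(1)$, so the bounded distance equivalence of $\Lambda(\Gamma,W)$ and $\Lambda(\Gamma,W')$ yields
\[
\Big|\sum_{n=0}^{N-1}\chi_W(x+n\alpha)-\sum_{n=0}^{N-1}\chi_{W'}(x+n\alpha)\Big|\le C\quad\text{a.e.}
\]
At this point one invokes \cite[Theorem~7.1]{GL15}, which states precisely that this bounded-difference discrepancy condition is equivalent to equidecomposability of $W,W'$ up to measure zero with Riemann measurable (resp.\ polytope) pieces using translations from $\Z\alpha+\Z^d=p_2(\Gamma)$. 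The construction of the pieces is thus imported wholesale from \cite{GL15}; the paper does not attempt the direct Boolean construction you outline.
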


The proof below does not rely on the measurable Hall's theorem
which only gives equidecomposability with measurable pieces.
It is rather based on the connection of the problem to
bounded remainder sets and the
results obtained in \cite{GL15}, \cite{GL18}.

\subsection{Lattices in general position}
\label{sec:modelsets}

We say that a lattice $\Gamma$ in
$\R \times \R^d$ is  in \emph{general position}
if the restriction of $p_1$ to $\Gamma$ is injective, 
and the image $p_2(\Gam)$ is dense in $\R^d$.

In \cite{GL18} the term ``general position''
was used to indicate that the restrictions of
both $p_1$ and $p_2$ to
$\Gamma$ are injective, and both their
images $p_1(\Gam)$ and $p_2(\Gam)$ are dense 
in $\R$ and $\R^d$ respectively.
These two definitions are in fact equivalent:

\begin{lemma}
If $\Gamma \sbt \R \times \R^d$ is a lattice in 
general position, then also
the restriction of $p_2$ 
to $\Gamma$ is injective, 
and the image $p_1(\Gam)$ is dense in $\R$.
\end{lemma}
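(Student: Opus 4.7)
The plan is to address the two conclusions---injectivity of $p_2|_\Gamma$ and density of $p_1(\Gamma)$ in $\R$---separately, in each case by a rank count combined with the classification of discrete subgroups of $\R$ and of $\R^d$.

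For injectivity of $p_2|_\Gamma$, I would argue by contradiction. Set $N := \{\gamma \in \Gamma : p_2(\gamma) = 0\}$ and suppose $N \neq \{0\}$. Then $N$ is a nontrivial discrete subgroup contained in the line $\R \times \{0\}$, so it is cyclic of rank $1$. Since $\Gamma$ has rank $d+1$, the image $p_2(\Gamma) \cong \Gamma/N$ is a torsion-free abelian group of rank exactly $d$. The key observation is that no rank-$d$ subgroup of $\R^d$ can be dense: its $d$ generators are either $\R$-linearly independent, in which case the subgroup is a full-rank lattice and hence discrete, or $\R$-linearly dependent, in which case the subgroup lies in a proper linear subspace of $\R^d$. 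Both alternatives contradict the hypothesis that $p_2(\Gamma)$ is dense in $\R^d$, so in fact $N = \{0\}$.

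For density of $p_1(\Gamma)$ in $\R$, I would invoke the dichotomy that every subgroup of $\R$ is either discrete---hence of the form $\{0\}$ or $c\Z$ for some $c > 0$, and in particular of rank at most $1$---or dense. Since $p_1|_\Gamma$ is assumed injective and $\Gamma$ has rank $d+1 \geq 2$, the subgroup $p_1(\Gamma)$ has rank at least $2$ and therefore cannot be discrete, so it must be dense.

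I do not expect either step to present a genuine obstacle: the proof rests on elementary structure theorems for subgroups of $\R$ and of $\R^d$. The one conceptual ingredient worth isolating is the observation, used in the first step, that a finitely generated subgroup of $\R^d$ of rank $d$ can never be dense---a fact which in turn reduces to the clean dichotomy between full-rank lattices and subgroups contained in proper subspaces.
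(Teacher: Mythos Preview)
Your argument is correct. Both halves rest on exactly the right observations: a rank-$d$ subgroup of $\R^d$ is either a full lattice or trapped in a hyperplane, and a subgroup of $\R$ of rank at least $2$ cannot be discrete.

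The paper's proof reaches the same conclusions by a slightly more hands-on route: it fixes a basis $v_1,\dots,v_{d+1}$ of $\Gamma$, observes that (after reordering) $p_2(v_1),\dots,p_2(v_d)$ are linearly independent, writes $p_2(v_{d+1})=\sum_j \alpha_j\, p_2(v_j)$, and then uses density of $p_2(\Gamma)$ to force $1,\alpha_1,\dots,\alpha_d$ to be rationally independent, from which injectivity of $p_2|_\Gamma$ is read off directly; the density of $p_1(\Gamma)$ is deduced from the rational independence of $p_1(v_1),\dots,p_1(v_{d+1})$. Your abstract rank count is cleaner in that it sidesteps the need to single out a basis and justify that $d$ of the projected basis vectors are linearly independent (a point the paper passes over); the paper's coordinate version, on the other hand, dovetails with the explicit ``special form'' parametrization of $\Gamma$ used immediately afterward.
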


\begin{proof}
Let $v_1, \dots, v_{d+1}$ be a basis for the lattice $\Gam$. 
The assumption that 
$p_2(\Gam)$ is dense in $\R^d$ implies that
$p_2(v_1), \dots, p_2(v_d)$ must be linearly independent
vectors in $\R^d$. Hence the vector  $p_2(v_{d+1})$ 
admits a unique expansion
$p_2(v_{d+1})  = \sum_{j=1}^{d} \al_j p_2(v_j)$.
Using again the assumption that 
$p_2(\Gam)$ is dense in $\R^d$ implies that
the numbers $1, \al_1, \dots, \al_d$ are
rationally independent.
As a consequence,
 the restriction of $p_2$ to $\Gamma$ is injective.
 
Since
the restriction of $p_1$ to $\Gamma$ is injective,
the numbers 
$p_1(v_1), \dots, p_1(v_{d+1})$
must be  rationally independent. Hence these numbers
generate a dense subgroup of $\R$. But this subgroup
coincides with
the image $p_1(\Gam)$, so this image 
is dense in $\R$.
\end{proof}

\subsection{Lattices in special form}
Following \cite[Section 4]{GL18} we define the notion of a lattice of special form.

\begin{definition}
\label{def:speciallat}
We say that a lattice
$\Gamma$ in $\R \times \R^d$ is of \emph{special form} if 
\begin{equation}
\label{eq:special}
 \Gamma = \{ (  n + \beta^{\top}(n \alpha + m), n \alpha + m ) :n \in \Z, \, m \in \Z^d  \}
\end{equation}
where $\alpha$, $\beta$ are column vectors in $\R^d$ satisfying the following conditions:
\begin{enumerate-num}
\item \label{it:alpha} The vector $\alpha = (\alpha_1, \alpha_2, \ldots , \alpha_d)^{\top}$ is such that the numbers $1 , \alpha_1, \alpha_2, \ldots , \alpha_d$ are linearly independent over the rationals;
\item \label{it:beta} The vector $\beta = (\beta_1 , \beta_2, \ldots , \beta_d)^{\top}$ is such that the numbers $\beta_1, \beta_2, \ldots , \beta_d, 1+ \beta^{\top} \alpha$ are linearly independent over the rationals.
\end{enumerate-num}
\end{definition}

It is easy to check that
the conditions imposed on the vectors $\alpha$ and $\beta$ are precisely those necessary and sufficient for $\Gamma$ to be in general position.

Let $a$ be a nonzero real scalar and $B$ be a $d \times d$ invertible real matrix. 
We consider a linear and invertible transformation 
$T$ from $\R \times \R^d$ onto itself given by
\begin{equation}
\label{eq:transt}
T  (x, y) = (ax, By) ,  \quad (x,y )  \in \R \times \R^d.
\end{equation}

\begin{lemma}[{see \cite[Lemma 4.3]{GL18}}]
\label{lem:s.f.1}
Assume that $L \subset \R \times \R^d$ is
 a lattice in general position.
Then there exist a
 lattice $\Gamma$ of special form \eqref{eq:special} 
and an invertible linear   transformation $T$ 
of the form \eqref{eq:transt} such that $T(L)=\Gamma$.
\end{lemma}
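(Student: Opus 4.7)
The plan is to reduce $L$ to special form by constructing $T$ and $\Gam$ directly from a $\Z$-basis of $L$. Fix such a basis $v_1, \dots, v_{d+1}$. From the proof of the preceding lemma, the denseness of $p_2(L)$ in $\R^d$ forces $p_2(v_1), \dots, p_2(v_d)$ to be linearly independent over $\R$ (after relabeling if necessary), while $p_2(v_{d+1}) = \sum_{j=1}^{d}\alpha_j\, p_2(v_j)$ with $1, \alpha_1, \dots, \alpha_d$ rationally independent. This already produces the vector $\alpha = (\alpha_1,\dots,\alpha_d)^{\top}$ that will appear in \eqref{eq:special} and verifies condition \ref{it:alpha} of \defref{def:speciallat}.

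Next I would let $B$ be the unique invertible $d \times d$ real matrix sending $p_2(v_j)$ to the standard basis vector $e_j \in \R^d$ for $j = 1, \dots, d$; then automatically $B\, p_2(v_{d+1}) = \alpha$. The remaining parameter is the scalar $a \in \R \setminus \{0\}$, and I would fix it by demanding that the images $T(v_j) = (a\, p_1(v_j), e_j)$ for $j \le d$ and $T(v_{d+1}) = (a\, p_1(v_{d+1}), \alpha)$ coincide with the natural generators $(\beta_j, e_j)$ and $(1+\beta^{\top}\alpha,\, \alpha)$ of the special-form lattice \eqref{eq:special} (corresponding to $(n,m) = (0,e_j)$ and $(n,m)=(1,0)$, respectively). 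Setting $\beta_j := a\, p_1(v_j)$ and substituting into the relation coming from $v_{d+1}$ reduces everything to the single scalar equation
\begin{equation*}
a\Big(p_1(v_{d+1}) - \sum_{j=1}^{d} \alpha_j\, p_1(v_j)\Big) = 1.
\end{equation*}
Since $v_1,\dots,v_{d+1}$ are $\R$-linearly independent in $\R^{d+1}$ while their $p_2$-components already satisfy $p_2(v_{d+1}) - \sum_j \alpha_j p_2(v_j) = 0$, the $p_1$-component in parentheses must be nonzero, so $a$ is well defined.

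It then remains to verify condition \ref{it:beta} of \defref{def:speciallat}, namely that $\beta_1,\dots,\beta_d,\,1+\beta^{\top}\alpha$ are rationally independent. By construction these equal $a\, p_1(v_1),\dots,a\, p_1(v_{d+1})$, so it suffices to show that $p_1(v_1),\dots,p_1(v_{d+1})$ are themselves rationally independent: any nontrivial integer relation among them would, by injectivity of $p_1|_L$, force the same integer relation among the $v_i$, contradicting their $\Z$-linear independence. The main subtlety of the argument is thus the dovetailing of the two rational independence statements — one extracted from $p_2$ and encoded in $\alpha$, the other extracted from $p_1$ and encoded in $\beta$ — together with the nonvanishing of $p_1(v_{d+1}) - \sum_j \alpha_j p_1(v_j)$, which is what makes the rescaling $T$ of the required diagonal form \eqref{eq:transt} exist at all. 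With $a$, $B$, $\alpha$, $\beta$ chosen this way, $T$ maps the basis $\{v_j\}$ of $L$ onto a $\Z$-basis of the special-form lattice $\Gam$ defined by \eqref{eq:special}, yielding $T(L) = \Gam$.
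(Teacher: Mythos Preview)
Your argument is correct: you extract $\alpha$ from the $p_2$-relation among the basis vectors, solve the single scalar equation for $a$ (the nonvanishing of $p_1(v_{d+1})-\sum_j\alpha_j p_1(v_j)$ follows exactly as you say from the $\R$-linear independence of the $v_i$), set $\beta_j=a\,p_1(v_j)$, and then read off both conditions of \defref{def:speciallat} from the denseness of $p_2(L)$ and the injectivity of $p_1|_L$ respectively. The images $T(v_j)$ are precisely the generators of $\Gamma$ corresponding to $(n,m)=(0,e_j)$ and $(1,0)$, so $T(L)=\Gamma$.

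There is nothing to compare against in the paper itself: \lemref{lem:s.f.1} is quoted from \cite[Lemma 4.3]{GL18} and no proof is given here. Your direct construction is the natural one and is essentially the argument in \cite{GL18}.
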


We argue that by Lemma \ref{lem:s.f.1} it suffices to consider lattices of special form. For suppose Theorem \ref{thm:onedim} holds in this case, and suppose $\Lambda(L, W)$ and $\Lambda(L, W')$ are bounded distance equivalent. Then so are the ``lifted'' sets 
\begin{equation*}
L_{W} = \{ \ell \in L  : p_2(\ell) \in W \}, 
\quad
L_{W'} = \{ \ell \in L : p_2(\ell) \in W' \},
\end{equation*}
and thus also the sets 
\begin{equation*}
T(L_W) = \{(ap_1(\ell), Bp_2(\ell)) : p_2(\ell) \in W\} = \Gamma_{BW}
\end{equation*}
and 
\begin{equation*}
T(L_{W'}) = \{(ap_1(\ell), Bp_2(\ell)) : p_2(\ell) \in W'\} = \Gamma_{BW'}.
\end{equation*}
It follows that the projected sets $p_1(\Gamma_{BW})=\Lambda(\Gamma, BW)$ and $p_1(\Gamma_{BW'})=\Lambda(\Gamma, BW')$ are bounded distance equivalent in $\R$. Since we assume that Theorem \ref{thm:onedim} holds for the lattice $\Gam$ of special form, this implies that the sets $BW$ and $BW'$ are equidecomposable up to measure zero using translations from $p_2(\Gamma) = Bp_2(L)$. It follows that $W$ and $W'$ are equidecomposable up to measure zero using translations from $p_2(L)$. Finally, since $B$ is a linear and invertible map, properties of the pieces in the partition (such as Riemann measurability or them being polytopes) are preserved.

In what follows we will thus assume that $\Gam$ is 
a lattice of the special form \eqref{eq:special}.

\subsection{Point counting function}
If $\Lam$ is  a uniformly discrete set in $\R$, then
we define its point counting function $\nu(\Lam, x ) $ as
\begin{numcases}{\nu(\Lam, x ) = }
\# (\Lam \cap [0,x)), & $x \ge 0$, \label{eq:ct.1} \\[4pt]
 - \; \# (\Lam \cap [x,0)), & $x < 0$. \label{eq:ct.2} 
 \end{numcases}

\begin{lemma}
\label{lem:b.c.4}
If two uniformly discrete sets 
$\Lam, \Lam' \sbt \R$ are
bounded distance equivalent,
then there is a constant $C$ such that
$|\nu(\Lam, x )  - \nu(\Lam', x ) | \le C$
for all $x \in \R$.
\end{lemma}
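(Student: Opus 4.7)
The plan is to exploit the bijection coming from bounded distance equivalence together with the fact that a uniformly discrete set contains only boundedly many points in any interval of fixed length. Fix a bijection $\chi : \Lam \to \Lam'$ with $|\chi(\lam) - \lam| \le K$ for all $\lam \in \Lam$, and let
\begin{equation*}
M = \sup_{y \in \R} \max\bigl( \#(\Lam \cap [y, y+K)),\ \#(\Lam' \cap [y, y+K)) \bigr),
\end{equation*}
which is finite because $\Lam$ and $\Lam'$ are uniformly discrete. I expect the final constant to be $C = 2M$, and the argument to be symmetric in $\Lam, \Lam'$ and in the sign of $x$.

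For $x \ge 0$, I would observe that $\chi$ maps $\Lam \cap [0,x)$ injectively into $\Lam' \cap [-K, x+K)$, so
\begin{equation*}
\nu(\Lam, x) \le \#(\Lam' \cap [-K, 0)) + \nu(\Lam', x) + \#(\Lam' \cap [x, x+K)) \le \nu(\Lam', x) + 2M.
\end{equation*}
Applying the same argument to $\chi^{-1}$ (which also moves points by at most $K$) gives the reverse inequality $\nu(\Lam', x) \le \nu(\Lam, x) + 2M$, so $|\nu(\Lam,x) - \nu(\Lam',x)| \le 2M$ when $x \ge 0$. For $x < 0$, I would repeat the same reasoning using the definition $\nu(\Lam,x) = -\#(\Lam \cap [x,0))$: the bijection $\chi$ sends $\Lam \cap [x,0)$ injectively into $\Lam' \cap [x-K, K)$, and splitting this interval into $[x-K, x) \cup [x,0) \cup [0,K)$ yields $\#(\Lam \cap [x,0)) \le \#(\Lam' \cap [x,0)) + 2M$, hence $\nu(\Lam, x) \ge \nu(\Lam', x) - 2M$, and symmetrically the reverse.

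There is no real obstacle here: the statement is essentially a packaging of the obvious inequality that two counting functions differing by a bounded distance matching can differ only by the number of points a uniformly discrete set can stuff into an interval of length $K$. The only thing to be careful about is treating $x < 0$ and $x \ge 0$ consistently with the sign convention in \eqref{eq:ct.1}, and ensuring that one always applies $\chi$ to points of $\Lam$ and $\chi^{-1}$ to points of $\Lam'$ rather than confusing the two directions.
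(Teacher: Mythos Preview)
Your proof is correct. The paper itself omits the proof entirely, stating only ``This is obvious and so the proof is omitted,'' so your argument supplies precisely the routine verification the authors had in mind.
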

 
This is obvious and so the proof is omitted.

\subsection{Point counting for cut-and-project sets}
Let $W$ be a bounded set in $\R^d$, and  
\begin{equation}
\Lambda (\Gamma, W) := \{ p_1(\gamma) : \gamma \in \Gamma , \, p_2(\gamma) \in W \} 
\end{equation}
be the model set in $\R$ generated by the lattice $\Gam$ of special form \eqref{eq:special}
 and the window $W$.
It is well-known that $\Lambda(\Gamma, W)$ is a uniformly discrete set.
We recall that
\begin{equation}
\label{eq:chis}
 \chi_W(x) := \sum_{m \in \Z^d} \1_W(x+m), \quad x \in \R^d,
\end{equation}
denotes  the multiplicity function of the projection of $W$ onto
$\T^d = \R^d / \Z^d$.

\begin{lemma}
\label{lem:b.c.5}
The point counting function of  $\Lambda (\Gamma, W)$ satisfies
\begin{equation}
\label{eq:discr}
\nu(\Lambda (\Gamma, W), N)
 =  \sum_{n=0}^{N-1} \chi_W(n \al)  + O(1), \quad N \to + \infty.
\end{equation}
\end{lemma}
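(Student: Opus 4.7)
The plan is to unpack the description of $\Gamma$ in \eqref{eq:special}, parametrize the points of $\Lambda(\Gamma,W)$ explicitly by pairs $(n,m) \in \Z \times \Z^d$, and show that the window condition $p_2(\gamma)\in W$ decouples from the counting condition $0 \le p_1(\gamma) < N$ up to an $O(1)$ boundary term.

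First I would write a point $\gamma \in \Gamma$ as $\gamma = \gamma(n,m) = (n + \beta^\top (n\alpha+m),\, n\alpha + m)$ with $(n,m) \in \Z \times \Z^d$ uniquely determined by $\gamma$ (because $p_1|_\Gamma$ is injective, or equivalently because $1,\alpha_1,\dots,\alpha_d$ are rationally independent). For each fixed $n \in \Z$, the condition $p_2(\gamma(n,m)) = n\alpha + m \in W$ is satisfied by exactly $\chi_W(n\alpha)$ values of $m \in \Z^d$, by the very definition \eqref{eq:chis} of the multiplicity function.

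Since $W$ is bounded, there is a constant $M = M(W,\beta) > 0$ such that $|\beta^\top w| \le M$ for every $w \in W$. Hence whenever $p_2(\gamma(n,m)) \in W$, the first coordinate $p_1(\gamma(n,m)) = n + \beta^\top(n\alpha+m)$ differs from $n$ by at most $M$. It follows that
\begin{equation*}
\{n \in \Z : M \le n \le N - M - 1\} \subset \{n \in \Z : 0 \le p_1(\gamma(n,m)) < N \text{ whenever } n\alpha+m \in W\}
\subset \{n \in \Z : -M < n < N + M\}.
\end{equation*}
Summing $\chi_W(n\alpha)$ over these ranges, the inner and outer sums differ from $\sum_{n=0}^{N-1}\chi_W(n\alpha)$ by at most $\lceil 2M \rceil \cdot \sup_x \chi_W(x)$, and $\sup_x \chi_W(x) < \infty$ because $W$ is bounded. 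Combining these inclusions with the count $\chi_W(n\alpha)$ of admissible $m$ for each $n$ yields \eqref{eq:discr}.

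The only delicate point is the boundary: one must carefully verify that whether or not $0 \le p_1(\gamma(n,m)) < N$ depends on $m$ in general, but only for $n$ lying in an interval of length $O(1)$ near $0$ or near $N$. Since for each such $n$ the contribution is at most $\sup \chi_W < \infty$, the total discrepancy stays bounded uniformly in $N$, which is precisely the $O(1)$ error claimed. No genuine obstacle arises; the argument is essentially a careful bookkeeping of the boundary $n$'s.
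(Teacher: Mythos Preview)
Your proposal is correct and follows essentially the same approach as the paper: parametrize $\Gamma$ by $(n,m)\in\Z\times\Z^d$ via \eqref{eq:special}, observe that $|p_1(\gamma(n,m))-n|\le M$ whenever $n\alpha+m\in W$ because $\beta^\top W$ is bounded, and conclude that $\nu(\Lambda(\Gamma,W),N)$ differs from $\sum_{n=0}^{N-1}\chi_W(n\alpha)$ by at most a constant multiple of $\sup_x\chi_W(x)$. The only quibble is your parenthetical justification for the uniqueness of $(n,m)$: it follows simply from the fact that \eqref{eq:special} exhibits a lattice basis (or, if you prefer, from the injectivity of $p_2|_\Gamma$ implied by condition \ref{it:alpha}), not from the injectivity of $p_1|_\Gamma$.
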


\begin{proof}
Indeed, due to the special form \eqref{eq:special}, the   elements $\gam \in \Gam$
may be parametrized by the vectors $(n,m) \in \Z \times \Z^d$ in such a way
that 
\begin{equation}
\label{eq:pt.a.1}
p_1(\gam) = n + \beta^{\top} (n \al + m), \quad
p_2(\gam) = n \al + m.
\end{equation}
Now the point $p_1(\gam)$ belongs to
$\Lambda (\Gamma, W)$ if and only if
$n \al + m \in W$. In this case
\begin{equation}
 p_1(\gam) - n   =  \beta^{\top} (n \al + m) \in \beta^{\top} W,
\end{equation}
and   $\beta^{\top} W$ is a bounded subset of $\R$.
Hence there is  $C = C(\Gam, W)$ such that
\begin{equation}
| p_1(\gam) - n | \le C
\end{equation}
whenever $p_1(\gam)$ is a 
 point in $\Lambda (\Gamma, W)$.
It follows that 
$\nu(\Lambda (\Gamma, W), N)$ differs from 
\begin{equation}
 \# \{ (n,m) \in \Z \times \Z^d : 0 \le n \le N-1, n \al + m \in W\}
\end{equation}
by a bounded magnitude, which is equivalent to \eqref{eq:discr}.
\end{proof}

\begin{lemma}
\label{lem:d.r.4}
Let $W, W'$ be two bounded, Riemann measurable sets in $\R^d$.
If  $\Lambda (\Gamma, W)$ and
$\Lambda (\Gamma, W')$ are 
bounded distance equivalent,
then there is a constant $C$ such that
\begin{equation}
\label{eq:bd.s.3}
\Big| \sum_{n=0}^{N-1} \chi_W(x+n \al) -
\sum_{n=0}^{N-1} \chi_{W'}(x+n \al) \Big| \le C \quad \text{a.e.}
\end{equation}
holds for every $N$.
\end{lemma}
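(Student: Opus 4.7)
The plan is to reduce the a.e.\ claim to a point-counting statement for translated windows, combining the window-shift machinery of \secref{s:bde} with \lemref{lem:b.c.4} and \lemref{lem:b.c.5}. Since $\chi_W$ and $\chi_{W'}$ are $\Z^d$-periodic it suffices to consider $x$ in a bounded fundamental domain $Q$ of $\T^d$. The same computation as in \lemref{lem:b.c.5}, applied to the shifted window $W - x$ and using the parametrization \eqref{eq:pt.a.1}, yields
\begin{equation*}
\nu\bigl(\Lambda(\Gam, W - x), N\bigr) = \sum_{n=0}^{N-1} \chi_W(x + n\al) + O(1),
\end{equation*}
and likewise for $W'$; the $O(1)$ term is controlled by $\sup_{w \in W \cup W',\, x \in Q} |\beta^{\top}(w - x)|$, hence uniform in both $x \in Q$ and $N$. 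It therefore suffices to show that $|\nu(\Lambda(\Gam, W-x), N) - \nu(\Lambda(\Gam, W'-x), N)|$ is bounded uniformly in $N$ for a.e.\ $x$.

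To achieve this, I transfer the bounded distance equivalence of the original pair to every shifted pair. By the argument at the beginning of \secref{s:bde}, the injectivity of $p_1|_\Gam$ upgrades $\Lambda(\Gam, W) \bde \Lambda(\Gam, W')$ to $\Gam_W \bde \Gam_{W'}$ with some constant $K$. Because $\Gam$ is of special form, $p_2|_\Gam$ is injective as well (by the opening lemma of \secref{s:1d}), so the sublattice $L$ from \secref{s:bde} can be taken equal to $\Gam$ and \lemref{lm:l.1.3} applies directly. It produces $\Gam_{W-x} \bde \Gam_{W'-x}$ with the same constant $K$ for every $x$ satisfying $(\partial W - x) \cap p_2(\Gam) = (\partial W' - x) \cap p_2(\Gam) = \emptyset$. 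Since $p_1$ does not increase distances and $p_1|_\Gam$ is injective, this descends to $\Lambda(\Gam, W-x) \bde \Lambda(\Gam, W'-x)$ with constant $K$; \lemref{lem:b.c.4} then bounds the difference of their counting functions by a constant depending only on $K$ and on a uniform discreteness radius of the two model sets, which is itself uniform in $x$ because $W \cup W'$ is a fixed bounded set.

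The set on which the boundary condition fails is $\bigcup_{\gamma \in \Gam} \bigl((\partial W - p_2(\gamma)) \cup (\partial W' - p_2(\gamma))\bigr)$, a countable union of translates of $\partial W \cup \partial W'$; by Riemann measurability of $W, W'$ it has Lebesgue measure zero. Combined with the first paragraph this yields the lemma. The main obstacle is not a difficult estimate but the bookkeeping of uniformity in $x$: one must verify that the implicit constant in the shifted form of \lemref{lem:b.c.5} and the constant produced by \lemref{lem:b.c.4} are both independent of $x \in Q$. These checks are routine given that $W, W'$ are fixed bounded sets and $\beta$ is fixed, but they must be made explicit to ensure the final $C$ depends only on $\Gam$, $W$, and $W'$.
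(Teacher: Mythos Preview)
Your argument is correct, but the paper's proof is considerably shorter and avoids the transfer machinery of \lemref{lm:l.1.3} entirely. The paper only establishes the bound at a single point: combining \lemref{lem:b.c.4} and \lemref{lem:b.c.5} gives $|S_N(0)| \le C$ for all $N$, where $S_N(x) = \sum_{n=0}^{N-1} (\chi_W - \chi_{W'})(x + n\al)$. It then propagates this bound to all of $\T^d$ via the telescoping identity $S_N(j\al) = S_{N+j}(0) - S_j(0)$, which yields $|S_N| \le 2C$ on the dense orbit $\{j\al\}_{j \ge 1}$; Riemann measurability of $W, W'$ makes each $S_N$ continuous a.e., so the bound extends to a.e.\ $x$. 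Your route instead re-establishes the bounded distance equivalence for every shifted pair $\Gam_{W-x}, \Gam_{W'-x}$ separately and projects down. This works, and your uniformity checks (the $O(1)$ in the shifted \lemref{lem:b.c.5} controlled by $\sup_{Q}|\beta^\top(\cdot)|$, the separation constant of the model sets depending only on $W-W$ and $W'-W'$) are sound. But you are invoking a lemma from \secref{s:bde} whose proof is substantially longer than the one-line cocycle identity the paper uses, so the trade-off is against you: the telescoping trick replaces the entire window-shift argument.
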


\begin{proof}
Define
$f(x) = \chi_W(x) -  \chi_{W'}(x)$
and $S_N(x) = \sum_{n=0}^{N-1} f(x + n\al)$.
Lemmas \ref{lem:b.c.4} and \ref{lem:b.c.5}
imply the existence of 
a constant $C$ such that
$|S_N(0)| \le C$ for every $N$.
We now use an argument from
\cite[Proposition 2.2]{GL15}. 
The function $S_N$ is $\Z^d$-periodic and we have
$S_N(x + j \al) = S_{N+j}(x) - S_j(x)$, 
 hence $|S_N| \le 2C$ on the set
$\{j \al\}_{j=1}^{\infty}$
which is dense in $\T^d  = \R^d / \Z^d$.
Since $W$ and $ W'$ are Riemann measurable sets,
the function $S_N$ is continuous at almost every point,
so it follows that $|S_N| \le 2C$ a.e.
\end{proof}

\subsection{Conclusion of the proof of \thmref{thm:onedim}}

We can now use the observations made above in order to prove
\thmref{thm:onedim}. Indeed, due to \lemref{lem:s.f.1}
we may assume that $\Gam$ is 
a lattice of the special form \eqref{eq:special}.
By \lemref{lem:d.r.4} there exists a constant $C$ such
that  the estimate
\eqref{eq:bd.s.3} holds for every $N$.
We now invoke
\cite[Theorem 7.1]{GL15} which asserts that the condition
\eqref{eq:bd.s.3} 
is satisfied if and only if $W, W'$ are equidecomposable 
up to measure zero
with Riemann measurable pieces, using only translations by 
vectors in $\mathbb Z\alpha+\mathbb Z^d = p_2(\Gam)$;
and moreover, if $W, W'$  are two polytopes in $\mathbb R^d$ 
then the pieces in the
equidecomposition may be chosen so that they are also polytopes.
This completes the proof of 
\thmref{thm:onedim}.
\qed

\subsection{Cut-and-project sets in dimensions two and higher}

Finally we show that \thmref{thm:onedim}
\emph{does not} remain true (at least, the claim about
equidecomposition with polytope pieces) for cut-and-project sets in dimensions two and higher. 

Let $\Gam$ be a lattice in $\mathbb{R}^m \times \mathbb{R}^n$ such that $p_1 |_{\Gamma}$ is injective and  $p_2(\Gamma)$ is dense in $\mathbb{R}^n$.

\begin{theorem}
\label{thm:twodimhigher}
For any $m \ge 2$ and $n \ge 1$,
there exist a lattice
$\Gam \sbt \R^m \times \R^n$ and two parallelepipeds $W, W' \sbt \R^n$ such that the two model sets $\Lambda(\Gamma, W)$ and $\Lambda(\Gamma, W')$ in $\R^m$
are bounded distance equivalent, but $W, W'$ are not equidecomposable with polytope pieces using translations by vectors from $p_2(\Gamma)$. 
\end{theorem}

\begin{proof}
The proof is based on 
 \cite[Theorem 1.2]{HKW14}. It follows from part (2) of
 this theorem that there exists a lattice
 (in fact, many lattices in a suitable sense)
$\Gam \sbt \R^m \times \R^n$
such that for any parallelepiped $W \sbt \R^n$
with sides parallel to the coordinate axes,
the model set
$\Lambda(\Gamma, W)$ is bounded distance
equivalent to a lattice in $\R^m$.

(We note that in \cite{HKW14} the authors realize a cut-and-project set as the set of return times of an $m$-dimensional linear flow to an $n$-dimensional linear section inside the $(m+n)$-dimensional torus $(\R / \Z)^{n+m}$; we refer the reader to
\cite[Section 2.5]{ASW22} where
the equivalence of this framework and the cut-and-project scheme is clarified.)

It is well-known that if  $W \sbt \R^n$
is a Riemann measurable set, then the model set $\Lam(\Gam,W)$ has a uniform density
$\mes(W) / \det(\Gam)$. It is also known
that any two lattices in $\R^m$ of the same density
are bounded distance equivalent,
see \cite[Theorem 5.2]{DO90},
\cite{DO91}, \cite[Section 3.2]{Kol97}.
As a consequence, for any two parallelepipeds
$W, W' \sbt \R^n$ with sides parallel to the coordinate axes,
and such that $\mes(W) = \mes(W')$, the two model sets
$\Lambda(\Gamma, W)$ and
$\Lambda(\Gamma, W')$ are bounded distance
equivalent to each other
(since each model set is bounded distance equivalent to a lattice, and the two lattices have the same density because the windows $W,W'$ have the same measure).

Hence, to prove our theorem it would
suffice that we construct
two parallelepipeds
$W, W' \sbt \R^n$ with sides parallel to the coordinate axes,
and with $\mes(W) = \mes(W')$, 
but such that $W, W'$ \emph{are not}
equidecomposable with polytope pieces
using  translations by vectors from $p_2(\Gamma)$.
This can be done as follows.

Let $e_1$ be the first element of the
standard basis in $\R^n$,  and let
$H \sbt \R^n$ be the linear subspace spanned
by the other $n-1$ basis elements
$e_2, \dots, e_n$ (note that if
$n=1$ then we have $H = \{0\}$). 
We consider a real-valued 
function $\varphi$ 
defined on the set of all polytopes in 
$\mathbb R^n$, given by
\begin{equation}
\pphi(A) = \sum_{F} \eps(F)  \Vol_{n-1}(F),
\end{equation}
where $F$ goes through all the facets of
$A$ that are contained in $H + p_2(\gam)$
for some $\gam \in \Gam$; $\eps(F) = +1$
or $-1$ depending on whether 
 the outward normal to the facet
$F$ is pointing in the direction of $e_1$
or in the opposite direction; and
$\Vol_{n-1}(F)$ is the $(n-1)$-dimensional
volume of the facet $F$.

We recall that a \emph{facet} of a polytope $A \sbt \R^n$ is a face of dimension $n-1$.

Let us now observe that the function $\pphi$ is an \emph{additive invariant} with respect to the group of translations by  vectors from $p_2(\Gam)$. This means 
 that (i) it is additive, namely, if $A$ and $B$ are two polytopes with disjoint interiors then $\varphi(A \cup B) = \varphi(A) + \varphi(B)$; 
and (ii) it is invariant with respect 
to translations by $p_2(\Gam)$,
 that is, $\varphi(A) = \varphi(A + p_2(\gam))$ whenever $A$ is a polytope and $\gam \in \Gam$.
In fact, this function $\pphi$ is an example obtained from a more 
general construction of additive 
invariants with respect to an arbitrary
subgroup of all the translations of $\R^n$,
see \cite[Sections 5.1--5.2]{GL15}.

It is now obvious that for two polytopes $A, B$ to be equidecomposable with polytope pieces using   translations  from $p_2(\Gam)$,  it is  necessary  
that $\varphi(A) = \varphi(B)$. Hence,
it will suffice that we construct
two parallelepipeds
$W, W' \sbt \R^n$ with sides parallel to the coordinate axes, and with $\mes(W) = \mes(W')$, but
such that $\pphi(W) \ne \pphi(W')$.

To this end, choose any  parallelepiped $W$  with sides parallel to the axes, such that
one of the facets of $W$ is  contained in $H$,
while the parallel facet is not 
contained in $H + p_2(\gam)$
for any $\gam \in \Gam$. This implies
that $\pphi(W) \ne 0$. Next,
we let $W'$ be a translated copy of $W$, 
chosen so that no facet of $W'$
 lies in 
$\bigcup_{\gam \in \Gam} ( H + p_2(\gam))$.
This ensures that $\pphi(W')=0$. 
We thus obtain two
  parallelepipeds  $W,W' \sbt \R^n$  with
$\mes(W) = \mes(W')$, 
but such that $W, W'$ are not
equidecomposable with polytope pieces
using    translations by vectors from $p_2(\Gamma)$. So this completes the proof
of \thmref{thm:twodimhigher}.
\end{proof}


\appendix

\section{On the results announced in \cite{Gre25}}
\label{sec:gre25}

\subsection{}
The purpose of this appendix is to clarify
the status of the results announced in the
paper \cite{Gre25}.
The paper has been retracted due to a gap
in the proof of \cite[Theorem 1.1]{Gre25},
see \cite{Gre26}. 

In the present paper, we gave a new proof
of this result
(\thmref{thm:gre25.main}) based on
the measurable Hall's theorem \cite{CS22}.
We note however that the new proof
\emph{does not} establish the additional
claim that 
if the sets $W, W'$ are polytopes, then
the pieces of the equidecomposition can also be chosen to be polytopes. 
Moreover, while this claim does hold for \emph{one-dimensional} cut-and-project sets (\thmref{thm:onedim}), \emph{it is false} for cut-and-project sets in dimensions two and higher (\thmref{thm:twodimhigher}).

As a consequence, the proofs of
\cite[Corollaries 1.3, 1.4, 1.7]{Gre25}
break down, since they rely on
the existence of an equidecomposition with polytope pieces. Moreover, 
\cite[Theorem 1.2, part (2)]{HKW14} provides
counterexamples showing that these corollaries, as well as
\cite[Conjecture 1.6]{Gre25}, \emph{are false} for model sets in  dimensions two and higher.

We note that the proof of \cite[Theorem 1.5]{Gre25} is correct.

\subsection{}
The results announced in \cite{Gre25} are still valid though for one-dimensional model sets. In what follows, we summarize these results and provide alternative proofs, based on the relation of the problem to bounded remainder sets.

The following
question was discussed in \cite{Gre25}. Let $ \Lambda(\Gamma, W) \sbt \R^m$ be a model set constructed from a lattice $\Gamma \subset \mathbb{R}^m
\times \mathbb{R}^n$ and a window $W \sbt \R^n$.
When is $ \Lambda(\Gamma, W) $ 
 bounded distance equivalent to a lattice?

The following theorem answers the question for certain one-dimensional model sets constructed from a polytope window $W$.

\begin{thm}
\label{thm:m6.5}
Let $ \Lambda(\Gamma, W) \sbt \R$ be a one-dimensional model set. Then,

\textup{(i)} 
 If $W \sbt \R$ is 
 a half-open interval, $W = [a,b)$, then
 $ \Lambda(\Gamma, W)$
is bounded distance equivalent to an
arithmetic progression if and only if
$b-a \in p_2(\Gamma)$;

\textup{(ii)} 
 If $W \sbt \R$ is a union of finitely many
  disjoint half-open intervals,
\begin{equation}
\label{eq:w6.6.3}
W = [a_1,b_1) \cup [a_2, b_2) \cup \cdots 
\cup [a_s, b_s),
\end{equation}
then $ \Lambda(\Gamma, W)$
is bounded distance equivalent to an
arithmetic progression if and only if
 there exists a permutation
$\sigma$ of $\{1, \ldots, s\}$ such that 
$b_{\sigma(j)}-a_j \in p_2(\Gamma)$ for $1 \leq j \leq s$;

\textup{(iii)} 
 If  $W \sbt \R^2$ is a half-open parallelogram of the form
\begin{equation}
\label{eq:w6.7.1}
W = \{u + t_1 v_1 + t_2 v_2 : 0 \le t_1, t_2 < 1 \},
\end{equation}
where $u \in \R^2$ and $v_1, v_2$ are two linearly independent vectors in $\R^2$,
then $ \Lambda(\Gamma, W)$
is bounded distance equivalent to an
arithmetic progression if and only if, after possibly 
exchanging $v_1$ and $v_2$, we have that
$v_1 \in p_2(\Gam)$ and $v_2 + \lam v_1 \in p_2(\Gam)$
for some $\lam \in \R$.
\end{thm}

\begin{proof}
First, note that for any Riemann measurable
window $W \sbt \R^d$, the model set
$ \Lambda(\Gamma, W)$ is known to have density
$\mes (W) / \det (\Gam)$, and so 
if the one-dimensional model set
is bounded distance equivalent to an
arithmetic progression, then this
 arithmetic progression must have the same
 density. 
 Second, due to \lemref{lem:s.f.1}
we may assume that $\Gam$ is 
a lattice of the special form \eqref{eq:special},
which implies that  $\det(\Gam) = 1$ and 
$\Lam(\Gam, W)$ has density $\mes(W)$.
It also implies that
$p_2(\Gam) = \Z \al + \Z^d$,
where $\al$ is the totally irrational vector
  from  \eqref{eq:special}.

Now, to prove the ``only if'' part
 in (i), (ii) and (iii), assume 
that   $ \Lambda(\Gamma, W)$ 
is   bounded distance equivalent to
an arithmetic progression of density $\mes(W)$.
It then follows from \lemref{lem:b.c.4} that the 
counting function $\nu(\Lambda(\Gamma, W), x ) $ 
defined by \eqref{eq:ct.1} and \eqref{eq:ct.2} 
satisfies $\nu(\Lambda(\Gamma, I), x ) =  \mes(W) \cdot x + O(1)$
as $x \to + \infty$.
In turn, using \lemref{lem:b.c.5} and 
\cite[Proposition 2.2]{GL15} this
yields that $W$ is a bounded remainder set
with respect to the totally irrational vector $\al$.
The conclusions  in (i), (ii) and (iii)
then follow from
\cite[Proposition 2.4]{GL15},
\cite[Theorem 5.2]{GL15}
and \cite[Theorem 3]{GL15} respectively.

In order to prove the ``if'' part in (i), (ii) and (iii), we observe that
 the assumptions imply using
\cite[Theorem 2.6]{GL15},
\cite[Theorem 5.2]{GL15}
and \cite[Theorem 3]{GL15} 
that $W$ is a bounded remainder set
with respect to $\al$.
Moreover, it can be inferred from the proofs
of these results that if 
$W$ is assumed to be half-open, then
the estimate
\begin{equation}
\label{eq:brs.y.3}
\sup_{n>0} \, \sup_{j\in\Z} \, 
\Big| \sum_{k=j+1}^{j+n} \chi_W(x+k\alpha) - n \mes W \Big| \le C
\end{equation}
in fact holds for every $x$ (i.e.\ not just a.e.)
where the constant $C = C(W, \al)$  
does not depend on $x$.
Hence, we can apply e.g.\ \cite[Lemma 6.1]{GL18}
which yields that the model set
$ \Lambda(\Gamma, W)$ 
is   bounded distance equivalent to
an arithmetic progression.
\end{proof}

\subsection{}
Another question discussed in \cite{Gre25} asks the following:
When is the model set $\Lam(\Gam, W+t) $ bounded distance
equivalent to  $ \Lam(\Gam, W)$? 

The next theorem answers the question
 for  one-dimensional model sets
 constructed from a half-open interval
window $W = [a,b)$.

\begin{thm}
\label{thm:m6.9}
Let $ \Lambda(\Gamma, W) \sbt \R$ be a one-dimensional model set
constructed from a 
 half-open interval  window $W = [a,b)$.
 Then  we have
 \begin{equation}
 \label{eq:g4.9}
  \Lam(\Gam, W+t) \bde \Lam(\Gam, W)
  \end{equation}
  if and only if   $b-a \in p_2(\Gam)$ or
  $t \in p_2(\Gam)$.
\end{thm}

\begin{proof}
Again  due to \lemref{lem:s.f.1}
we may assume that $\Gam$ is 
a lattice of the special form \eqref{eq:special},
which implies that 
$\Lam(\Gam, W)$ has density $b-a$
and 
$p_2(\Gam) = \Z \al + \Z$.

If $b-a \in p_2(\Gamma)$, then
\cite[Theorem 3.1]{DO90} yields that
both  $ \Lambda(\Gamma, W)$ and
 $ \Lambda(\Gamma, W+t)$ 
are bounded distance equivalent to an
arithmetic progression of density $b-a$,
which implies \eqref{eq:g4.9}.
 If $t \in p_2(\Gam)$,  then  
$t = p_2(\gam)$ for some $\gam \in \Gam$,
 and so in this case  we have
$ \Lambda(\Gamma, W+t) = 
 \Lambda(\Gamma, W) + p_1(\gam)$
 and again  \eqref{eq:g4.9} follows.

In the converse direction, assume that 
\eqref{eq:g4.9} is valid. Then by
\lemref{lem:d.r.4} there exists a constant $C$ such
that  the estimate
\eqref{eq:bd.s.3} holds for every $N$
with $W' = W+t$.
This implies, see
\cite[Section 7.3]{GL15}, that
$b-a \in \Z \alpha + \Z$
 or $t \in \Z \alpha + \Z$.
\end{proof}


\end{document}